\documentclass[reqno]{amsart}
\usepackage{amsmath}
\usepackage{amsfonts}
\usepackage{amstext}
\usepackage{amsbsy}
\usepackage{amsopn}
\usepackage{amsxtra}
\usepackage{upref}
\usepackage{amsthm}
\usepackage{amsmath}
\usepackage{amssymb}
\usepackage{enumerate}
\usepackage{bbm}
\usepackage[pdftex]{graphicx}
\usepackage{hyperref}
\usepackage{mathrsfs}
\usepackage{graphicx}

\usepackage{enumitem}

\parindent=0cm
\parskip=3mm

\newtheorem{teo}{Theorem}[section]
\newtheorem{prop}[teo]{Proposition}
\newtheorem{lema}[teo]{Lemma}
\newtheorem{coro}[teo]{Corollary}
\newtheorem{rem}[teo]{Remark}
\newtheorem{defi}[teo]{Definition}

\newtheorem*{claim*}{Claim}

 \usepackage{euscript}

\DeclareMathSymbol{\varnothing}{\mathord}{AMSb}{"3F}
\renewcommand{\emptyset}{\varnothing}

\def\R{{\mathbb R}}

\def\N{{\mathbb N}}

\def\M{{\mathcal M}}
\def\sm{\setminus}

\def\eps{\varepsilon}

\title{Transience and multifractal analysis}
\date{\today}

\begin{thanks}
{ G.I. was partially  supported by  the Center of Dynamical Systems and Related Fields c\'odigo ACT1103 and by Proyecto Fondecyt 1150058.  T.J. wishes
to thank Proyecto Mecesup-0711 for funding his visit to PUC-Chile. M.T. is grateful for the support of Proyecto Fondecyt 1110040 for funding his visit to PUC-Chile and for partial support from NSF grant DMS 1109587.  The authors thank the referees for their careful reading of the paper and useful suggestions. }
 \end{thanks}

\author{Godofredo Iommi} \address{Facultad de Matem\'aticas,
Pontificia Universidad Cat\'olica de Chile (PUC), Avenida Vicu\~na Mackenna 4860, Santiago, Chile}
\email{\href{mailto:giommi@mat.puc.cl}{giommi@mat.puc.cl}}
\urladdr{\url{http://www.mat.puc.cl/\textasciitilde giommi/}}
\author{Thomas Jordan} \address{The School of Mathematics, The University of Bristol, University Walk, Clifton, Bristol, BS8 1TW, UK}
\email{\href{mailto:Thomas.Jordan@bristol.ac.uk}{Thomas.Jordan@bristol.ac.uk}}
\urladdr{\url{http://www.maths.bris.ac.uk/~matmj/}}
\author{Mike Todd}
\address{Mathematical Institute\\
University of St Andrews\\
North Haugh\\
St Andrews\\
KY16 9SS\\
Scotland} \email{\href{mailto:m.todd@st-andrews.ac.uk}{m.todd@st-andrews.ac.uk}}
\urladdr{\url{http://www.mcs.st-and.ac.uk/~miket/}}

\begin{document}

\begin{abstract}
We study dimension theory for dissipative dynamical systems, proving a conditional variational principle for the quotients of Birkhoff averages restricted to the recurrent part of the system. On the other hand, we show that when the whole system is considered (and not just its recurrent part) the conditional variational principle does not necessarily hold. Moreover, we exhibit an example of a topologically transitive map having discontinuous Lyapunov spectrum. The mechanism producing all these pathological features on the multifractal spectra is transience, that is, the non-recurrent part of the dynamics.
\end{abstract}

\maketitle

\section{Introduction}
The dimension theory of dynamical systems has received a great deal of attention over the last fifteen years. Multifractal analysis is a sub-area of dimension theory devoted to study the complexity of level sets of invariant local quantities. Typical examples of these quantities are Birkhoff averages, Lyapunov exponents, local entropies and pointwise dimension. Usually, the geometry of the level sets is complicated and in order to quantify its size or complexity tools such as Hausdorff dimension or topological entropy are used.
Thermodynamic formalism is, in most cases,  the main technical device used in order to describe the various multifractal spectra. In this note we will be interested in multifractal analysis of Birkhoff averages and of quotients of Birkhoff averages. That is, given a dynamical system $T: X \to X$
and  functions $\phi, \psi: X \to \mathbb{R}$, with $\psi(x)>0$, we will be interested in the level sets determined by the quotient of Birkhoff averages of $\phi$ with $\psi$. Let
\begin{eqnarray} \label{eqn:alpha}
\alpha_m=\alpha_{m, \phi, \psi}:=\inf \left\{ \lim_{n \to \infty} \frac{\sum_{i=0}^{n-1} \phi (T^i x)}{\sum_{i=0}^{n-1} \psi (T^i x)}: x \in  X \right\} \textrm{ and }&\\
\alpha_M=\alpha_{M, \phi, \psi}:=\sup \left\{ \lim_{n \to \infty}  \frac{\sum_{i=0}^{n-1} \phi (T^i x)}{\sum_{i=0}^{n-1} \psi (T^i x)}: x \in X \right\}.
\end{eqnarray}

For $\alpha \in [\alpha_m,\alpha_M]$ we define the level set  of points having quotient of Birkhoff average equal to $\alpha$ by
\begin{equation} \label{eqn:J(alpha)}
J(\alpha)=J_{\phi, \psi}(\alpha):= \left\{x \in X : \lim_{n \to \infty}  \frac{\sum_{i=0}^{n-1} \phi (T^i x)}{\sum_{i=0}^{n-1} \psi (T^i x)}= \alpha \right\}.
	\end{equation}
Note that these sets induce the so called \emph{multifractal decomposition} of the repeller,
\begin{equation*}
X = \bigcup_{\alpha=\alpha_m}^{\alpha_M}J(\alpha) \text{ } \cup J',
\end{equation*}
where $J'$ is the \emph{irregular set} defined by,
\[J' =J'_{\phi, \psi}:= \left\{x \in  X :  \textrm{ the limit }\lim_{n \to \infty}  \frac{\sum_{i=0}^{n-1} \phi (T^i x)}{\sum_{i=0}^{n-1} \psi (T^i x)}  \textrm { does not exist}  \right\}. \]
The \emph{multifractal spectrum} is the function that encodes this decomposition and it is defined by
\[b(\alpha)=b_{\phi, \psi}(\alpha):= \dim_H(J_{\phi, \psi}(\alpha)),\]
 where $\dim_H$ denotes the Hausdorff dimension (see Section~\ref{ssec:haus} or \cite{fa} for more details). Note that if  $\psi\equiv 1$ then $b_{\phi, 1}$ gives a multifractal decomposition of Birkhoff averages. If the set $X$ is a compact interval, the dynamical system is uniformly expanding with finitely many piecewise monotone branches and the potentials $\phi$ and $\psi$ are  H\"older, it turns out that the map $\alpha\mapsto b_{\phi, \psi}(\alpha)$ is very well behaved.  Indeed, both $\alpha_{m,\phi, \psi}$ and $\alpha_{M,\phi, \psi}$ are finite and  the map $\alpha \mapsto b_{\phi, \psi}(\alpha)$ is real analytic (see the work of Barreira and Saussol \cite{bs}).

In the case where either $\phi=\log |T'|$ or $\psi=\log |T'|$ the map $\alpha \mapsto b_{\phi, \psi}(\alpha)$ can often be determined by looking at a Legendre or Fenchel transform of a suitable pressure function. In this case the results have been extended well beyond the uniformly hyperbolic setting, see \cite{gr,hmu, io, ku, ks, flww,n,olivier,pow,tv}. However without the assumption of uniform hyperbolicity it is no longer always the case that $\alpha \mapsto b_{\phi, \psi}(\alpha)$ will be analytic as shown in \cite{gr,kms,n,olivier,tv}.

For more general functions $\phi$ and $\psi$ the relationship to the Legendre or Fenchel transforms of certain pressure functions no longer holds. However in \cite{bs} it is shown $\alpha \mapsto b_{\phi, \psi}(\alpha)$ can still be related to suitable pressure functions. Some of these results were  extended by Iommi and Jordan \cite{ij2} to the case of expanding full-branched  interval maps, with countably many branches. However, as already mentioned,  in this situation it is not always the case that the spectrum is  real analytic. In \cite{ij2} it is shown that there will be regions where the spectrum does vary analytically but the transitions between these regions may not be analytic or even continuous. In the situation where the map is non-uniformly expanding, for example the Manneville-Pomeau map, it was shown in \cite{gr, olivier,n,tv} that the Lyapunov spectrum (equivalently the local dimension spectrum for the measure for maximal entropy) has a phase transition.  In the general case the spectrum may be related to those studied in \cite{ij2}. In this case it will not always be continuous, see Section 6 of \cite{ij2}. The lack of uniform hyperbolicity of the dynamical system being the reason for the irregular behaviour of the multifractal  spectrum.

Another important result in the study of multifractal analysis are the so-called conditional variational principles. Indeed, it has been shown for a very large class of dynamical systems (not necessarily uniformly hyperbolic) and for a large class of potentials (not necessarily H\"older) that the following holds:
\begin{equation*}
b_{\phi, \psi}(\alpha)= \sup\left\{ \frac{h(\mu)}{\int \log |F'|~{\rm d}\mu} : \frac{\int \phi~{\rm d}\mu}{\int \psi~{\rm d}\mu} = \alpha   \text{ and } \mu \in \M \right\},
\end{equation*}
where $\M$ denotes the set of $T-$invariant probability measures. See \cite{bs, cl, ffw, flp, flw,h,ij, jjop, olsen, pw} for works where this conditional variational principle has been obtained with different degrees of generality.

The aim of the present paper is to study multifractal spectra of quotients  of Birkhoff averages when the map is modelled by a topologically mixing countable Markov shift with no additional assumptions (e.g. the incidence matrix is not assumed to be finitely primitive). This allows us to study certain dissipative maps by which we mean maps where the Hausdorff dimension of the set of recurrent points is smaller than the Hausdorff dimension of the repeller of the map (see Sections~\ref{ssec:map} and \ref{ssec:haus} for precise definitions). Note that in this situation we cannot use the techniques from \cite{ij} and \cite{ij2} since both these papers are restricted to maps which can be modelled by a full shift (under this assumption the thermodynamic formalism is very well behaved and understood \cite{sa2}) and the techniques can not be applied without additional assumptions on the incidence matrix.

The multifractal analysis for the local dimension of Gibbs measures in this setting has been studied in \cite{io} but the technique of inducing used there does not work so well in the setting of Birkhoff averages and so we take a different approach.  Let us point out that dimension spectra of quotients  of Birkhoff averages has been studied in the particular case in which $\psi= \log |T'|$ in the work of  Barreira, Saussol and Schmeling \cite{BarSauSch02} for uniformly hyperbolic systems defined over compact spaces and by  Kesseb\"{o}hmer and  Urba\'nski \cite{ku} for maps that can be coded by countable Markov shifts with finitely primitive incidence matrix. In both cases there exist Gibbs measures for sufficiently smooth potentials \cite{mubook} which provides a powerful tool which simplifies the proofs. We stress that if the countable Markov shift does not have an finitely primitive incidence matrix then smooth potentials do not have corresponding Gibbs measures \cite{sa3}.

Dissipative maps arise naturally in a wide range of contexts, but the study of their dimension properties is still at an early stage. For example, in the context of rational maps  Avila and  Lyubich \cite[Theorem D]{al} have suggested the existence of a rational map with Julia set of positive area whose hyperbolic dimension (see the definition given in equation \eqref{def:hyp}) is strictly smaller than $2$.  In a different context, Stratmann and Falk and Stratmann and Urba\'{n}ski \cite{fs,su} proved that there exist Kleinian groups $G$ with limit set $L(G)$ for which the critical exponent of the corresponding Poincar\'e series $\delta(G)$ satisfies $\delta(G) < \dim_H L(G)$. These results extend those obtained by Patterson \cite{pa}. In \cite[Example 3.3]{io} an explicit example of an interval  Markov map with countably many branches for which the Hausdorff dimension of the recurrent set (see definition \ref{def:rec}) is strictly smaller than the corresponding dimension of the repeller is constructed. In all the above mentioned works the dissipation of the system is somehow measured  by the difference between the Hausdorff dimension of the repeller with that of the conservative part of the system.

In this paper we exhibit some of the pathologies that can easily occur in the dimension theory of dissipative systems. We not only study the dimension of the conservative part of the system but also the multifractal decomposition of the whole repeller (see Section \ref{sec:fibo}).  The example to which we will devote more attention is a model for an induced map of a Fibonacci unimodal map (see Section~\ref{sec:fibo}) which has been studied by Stratmann and Vogt \cite{sv} and by Bruin and Todd (see \cite{bt,bt2}).

We prove that the conditional variational principle for quotients of Birkhoff averages holds under certain assumptions when restricted to the recurrent set. Moreover, we exhibit a  map for which the Birkhoff spectrum $b(\alpha)$ is discontinuous. In this example the mechanism producing the discontinuity is \emph{transience}. Note that the Birkhoff spectrum  for this map does not satisfy the conditional variational principle for certain H\"older potentials. We stress that while recently in \cite{ij2} examples of discontinuous Birkhoff spectra were found in the non-uniformly hyperbolic setting, the situation we treat here is of a completely different nature.

The study of transience in dynamical systems has attracted some attention recently and its implications in thermodynamic formalism has been explored (see \cite{Cyr11,CS,it,sa2}). In this note we study some of the consequences that transience has in dimension theory. Of particular interest is Proposition \ref{prop:lyap} where we exhibit a map having discontinuous Lyapunov spectrum. This particular case of Birkhoff spectrum has been thoroughly studied over the last years in a wide range of contexts. Examples have been found where it is not a real analytic map (see \cite{gr, n}). In other cases the domain of the spectrum is not an interval. Indeed, the Chebyshev map $T(x)=4x(1-x)$ defined on the unit interval has only two Lyapunov exponents and hence the domain of the Lyapunov spectrum consists of two isolated points. More generally,
Makarov and Smirnov \cite{ms} showed that there are rational maps $T$ for which the domain of the  Lyapunov
spectrum consists of an interval together with finitely many  isolated points. However, the dimension of the set of points having Lyapunov exponent equal to one of these isolated points  is zero. The example we provide goes in the exact opposite direction. The domain is an interval but at the largest point in the domain the Hasudorff dimension jumps to $1$.

\section{Notation and statement of our main result} \label{sec:varp}
This section is devoted to stating the conditional variational principle for the quotient of Birkhoff averages restricted to the recurrent set,  followed by some preliminary results we will need to prove it. In order to do this, we will define the class of maps and potentials that we will consider as well as to recall some basic definitions from geometric measure theory.

\subsection{Symbolic spaces} Let $(\Sigma, \sigma)$  be a one-sided  Markov shift over the countable alphabet $\N$. This means
that there exists a matrix $(t_{ij})_{\N \times \N}$ of zeros and ones (with no row and no column made entirely of zeros) such that
\[\Sigma:= \left\{ (x_n)_{n \in \N} : t_{x_ix_{i+1}}=1 \text{ for every } i \in \N \right\}.  \]
The \emph{shift map} $\sigma: \Sigma \to \Sigma$ is defined by $\sigma(x_1x_2 x_2 \dots)=(x_2 x_2 \dots)$. We will always assume  the system $(\Sigma, \sigma)$ to be topologically mixing. In this context this means that for every $a,b \in \N$ there exists a positive integer $N$ such that for all $n \geq N$ there exists an admissible word $\underline{a}$ of length $n$ such that $a_{0}=a$ and $a_{n-1}=b$. Unlike the finite state case, this does not imply that some power of the transition matrix is positive. The space $\Sigma$ endowed with the topology generated by the cylinder sets
\[C_{i_1 i_2 \dots i_n}:=\left\{ (x_n) \in \Sigma : x_j=i_j \text{ for } j \in \{1,2,3 \dots n\}\right\},\]
is a non-compact space. We define the \emph{$n$-th variation} of a function $\phi:\Sigma \to \R$ by
\[var_n(\phi)= \sup_{(i_1 \dots i_n) \in \N^n} \sup_{x,y \in C_{i_1 i_2 \dots i_n}} |\phi(x)-\phi(y)| . \]
A function $\phi:\Sigma \to \R$ is \emph{ locally H\"older} if there exists $0< \gamma <1$ and $C>0$ such that for every $n \in \N$ we have $var_n(\phi) \leq C \gamma^n$ (note that this condition allows $\phi$ to be unbounded).

\subsection{The class of maps} \label{ssec:map} Given a compact interval $X \subset \R$, let $\{X_n\}_n\subset X$ be a countable collection of disjoint subintervals and let $T:\cup_nX_n\to X$ be a map which is differentiable on the interior of each set $X_n$. The \emph{repeller} of the map $T$ is defined by
\begin{equation*}
X^\infty:=\{x\in X:T^n(x) \text{ is defined for all } n\in \N\}.
\end{equation*}
We say that the map $T$ is \emph{Markov} if there exists a countable Markov shift $(\Sigma, \sigma)$ and a continuous bijective map $\pi :\Sigma \to X^\infty$  such that  $T \circ \pi = \pi \circ \sigma$. We will use the  notation $[i_1,\ldots, i_n]:= \pi(C_{i_1 \dots i_n})$. Let $\mathcal{R}$ denote the set of potentials $\phi:\cup_nX_n\to \R$ such that $\phi\circ\pi$ is locally H\"older and let $\mathcal{R}_0$ denote the set of such potentials $\phi\in \mathcal{R}$ for which there exists $\eps>0$ such that $\phi\ge \eps$.

Given $x\in X^\infty$, define the \emph{lower pointwise Lyapunov exponent} of $T$ at $x$ by
$\underline\lambda_T(x):=\liminf_n\frac1n\log|(T^n)'(x)|$. Denote by $\M$ the set of $T-$invariant probability measures. If $\mu \in \M$, we denote by $\lambda_T(\mu):= \int \log|T'| ~{\rm d} \mu$ the \emph{Lyapunov exponent} of $T$ with respect to the measure $\mu$.  Note that if $\mu$ is ergodic then $\underline\lambda_T(x)=\lambda_T(\mu)$ for $\mu$-a.e. $x$.

\begin{defi} \label{def:emv}
Given a bounded interval $X \subset \R$, let $\{X_n\}_n$ be a countable collection of disjoint subintervals with $\dim_H(\overline{\cup_n \partial X_n})=0$. The map $T:\cup_nX_n\to X$ is called an EMV (Expanding Markov (summable) Variation) map if
\begin{enumerate}
\item it is $C^1$ on $\text{int}\{X_n\}$ for each $n\in\N$;
\item there exists $\xi>1$ such that $\underline\lambda_T(x)>\log\xi$ for all $x\in X^{\infty}$.
\item it is Markov and it can be coded by a topologically mixing countable Markov shift.
\item with $\mathcal{R}$ defined by the shift structure above, $\log|T'| \in \mathcal{R}$
\end{enumerate}
\end{defi}

Observe that the second condition in Definition \ref{def:emv} means that for any $\mu\in \M$, $\int\log|T'|~{\rm d}\mu>\log\xi$, and in particular that for any periodic orbit $x, Tx, \ldots, T^{n-1}x$, we have $|(T^n)'(x)|>\xi^n$.   The fact that the system can be coded by a topologically mixing Markov shift means that there is a dense orbit, so $T$ is \emph{topologically transitive}.

The following set will play an important part in the rest of the note.
\begin{defi} \label{def:rec} Let $T$ be an EMV map. The \emph{recurrent set of $T$} is defined by
$$X_R:=\left\{x\in X^{\infty}:\exists  X_n \text{ and } n_k\to\infty \text{ with } T^{n_k}(x)\in X_n \text{ for all } k\in \N\right\}.$$
\end{defi}

We let $\phi\in\mathcal{R}$ and $\psi\in\mathcal{R}_0$. In this setting we define
\begin{eqnarray*}
\alpha_m=\alpha_{m, \phi, \psi}:=\inf \left\{ \lim_{n \to \infty} \frac{\sum_{i=0}^{n-1} \phi (T^i x)}{\sum_{i=0}^{n-1} \psi (T^i x)}: x \in  X^{\infty} \right\},&\\
\alpha_M=\alpha_{M, \phi, \psi}:=\sup \left\{ \lim_{n \to \infty}  \frac{\sum_{i=0}^{n-1} \phi (T^i x)}{\sum_{i=0}^{n-1} \psi (T^i x)}: x \in X^{\infty} \right\} \textrm{ and } &\\
J(\alpha)=J_{\phi, \psi}(\alpha):= \left\{x \in X^{\infty} : \lim_{n \to \infty}  \frac{\sum_{i=0}^{n-1} \phi (T^i x)}{\sum_{i=0}^{n-1} \psi (T^i x)}= \alpha \right\}.
\end{eqnarray*}
We will consider the restriction of the level set $J(\alpha)$ to the recurrent set for $T$,
\begin{equation*}
J_R(\alpha)=J_{R, \phi, \psi}:=J_{\phi, \psi}(\alpha)\cap X_R.
\end{equation*}

\subsection{Hausdorff dimension} \label{ssec:haus} We briefly recall the definition of the Hausdorff measure  (see \cite{ba, fa} for further details). Let $F \subset \R^d$ and $s, \delta \in \R^+$,
\begin{equation*}
H_\delta^{s}(F):=\inf \left\{ \sum_{i=1}^{\infty} |U_i|^s : \{U_i \}_i \text{ is a } \delta\text{-cover of } F \right\}.
\end{equation*}
The \emph{$s$-Hausdorff measure} of the set $F$ is defined by
\[ H^s(F):=  \lim_{\delta \to 0} H^s_{\delta} (F)\]
and the \emph{Hausdorff dimension} by
\[\dim_H F:=\inf\{s:H^s(F)=0\}=\sup\{s:H^s(F)=\infty\}.\]

We call a measure $\mu$ on $X$ \emph{dissipative} if $\mu(X_R)<\mu(X^\infty)$.  In the same spirit, we call the system \emph{dissipative} if $\dim_H(X_R)<\dim_H(X^\infty)$.  Note that a finite invariant measure cannot be dissipative.

\subsection{Main results}
Our main result establishes the conditional variational principle for the sets $J_R(\alpha)$. In the final section of the note we will give an example to show that it is not always true for the sets $J(\alpha)$.

\begin{teo} \label{thm:termo}
Let $T:\cup_nX_n\to X$ be a EMV map and $\phi, \psi :\cup_nX_n \to \R$ be such that $\phi \in \mathcal{R}$ and $\psi \in \mathcal{R}_0$. Let $\alpha \in (\alpha_m,\alpha_M)$. If there exists $K>0$ such that for every $x \in J_{R}(\alpha)$ we have that
\begin{equation} \label{eq:K}
 \limsup_{n\to\infty}\frac{S_n\psi(x)}{n}<K,
\end{equation}
then
\begin{equation*}
\dim_H(J_{R}(\alpha))=\sup\left\{\frac{h(\mu)}{\lambda_T(\mu)}:\frac{\int\phi~{\rm d}\mu}{\int\psi~{\rm d}\mu}=\alpha,  \max\left\{\lambda_T(\mu),\int\psi~{\rm d}\mu\right\} < \infty,\mu\in \M\right\}.
\end{equation*}
\end{teo}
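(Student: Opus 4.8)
The plan is to pass to the auxiliary potential $g:=\phi-\alpha\psi\in\mathcal{R}$, to recast the quotient condition as a Birkhoff-average condition for $g$, and then to prove matching lower and upper bounds for $\dim_H J_R(\alpha)$. The basic observation, used throughout, is that since $\psi\ge\eps>0$ we have $S_n\psi(x)\ge\eps n$, so $\frac{S_n\phi(x)}{S_n\psi(x)}\to\alpha$ is equivalent to $\frac{S_ng(x)}{S_n\psi(x)}\to0$; moreover $\frac{S_ng(x)}{n}\to0$ already forces $x\in J(\alpha)$ (divide by $S_n\psi\ge\eps n$), while the hypothesis \eqref{eq:K} that $\limsup_n\frac{S_n\psi(x)}{n}<K$ on $J_R(\alpha)$ gives the reverse implication there. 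Hence
\begin{equation*}
J_R(\alpha)\subseteq\Bigl\{x:\tfrac{S_ng(x)}{n}\to0\Bigr\}\subseteq J(\alpha),
\end{equation*}
and the constraint $\frac{\int\phi~{\rm d}\mu}{\int\psi~{\rm d}\mu}=\alpha$ in the variational formula is exactly $\int g~{\rm d}\mu=0$.

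For the lower bound I would argue directly with measures. Fix an ergodic $\mu\in\M$ with $\int g~{\rm d}\mu=0$ and $\max\{\lambda_T(\mu),\int\psi~{\rm d}\mu\}<\infty$. By Birkhoff's ergodic theorem $\mu$-a.e.\ point satisfies $\frac{S_n\phi}{n}\to\int\phi~{\rm d}\mu$ and $\frac{S_n\psi}{n}\to\int\psi~{\rm d}\mu>0$, so it lies in $J(\alpha)$; by the Poincar\'e recurrence theorem $\mu$-a.e.\ point is recurrent, whence $\mu(J_R(\alpha))=1$. Since $\lambda_T(\mu)<\infty$, the dimension formula for invariant measures of EMV maps gives $\dim_H\mu=h(\mu)/\lambda_T(\mu)$, so $\dim_H J_R(\alpha)\ge h(\mu)/\lambda_T(\mu)$. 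Taking the supremum over such $\mu$ (and reducing the general case to the ergodic one by ergodic decomposition) yields the lower bound; the only care required is to confirm the measure-dimension formula in this possibly non-compact setting.

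The upper bound is where \eqref{eq:K} does the real work and is the main obstacle. The strategy is a covering argument controlled by the (Gurevich) pressure, but restricted to the family of cylinders on which both $|S_ng|<\delta n$ and $S_n\psi<Kn$ hold — this second cutoff is forced by \eqref{eq:K} and is precisely what tames the non-compactness (for a full shift or a finitely primitive matrix one could invoke \cite{ij,ij2}, but here the incidence matrix is arbitrary). Using the bounded distortion supplied by the summable-variation hypothesis~(4) of Definition~\ref{def:emv}, so that $|[i_1,\dots,i_n]|\asymp\exp(-S_n\log|T'|)$, one estimates, for any $q\in\R$,
\begin{equation*}
H^s_\delta(J_R(\alpha))\ \lesssim\sum_{\substack{|S_ng|<\delta n\\ S_n\psi<Kn}}e^{-sS_n\log|T'|}\ \le\ e^{|q|\delta n}\sum_{S_n\psi<Kn}e^{qS_ng-sS_n\log|T'|},
\end{equation*}
so that $\dim_H J_R(\alpha)\le s$ as soon as the exponential growth rate of the restricted partition sum $\sum_{S_n\psi<Kn}e^{qS_ng-sS_n\log|T'|}$ is negative for some $q$. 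Optimising over $q$ and $s$ and identifying the resulting threshold with a constrained pressure, a variational-principle argument matches this infimum with $\sup\{h(\mu)/\lambda_T(\mu):\int g~{\rm d}\mu=0\}$; crucially, the cutoff $S_n\psi<Kn$ forces the relevant measures to satisfy $\int\psi~{\rm d}\mu\le K<\infty$, so the supremum ranges over exactly the measures in the statement.

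The hardest technical point will be showing that these restricted partition sums have a well-defined exponential growth rate and that their threshold coincides with the constrained variational quantity, without finite primitivity: one must approximate by finite (compact) subsystems, control the error introduced by the cutoff $S_n\psi<Kn$ as $n\to\infty$, and verify that the approximating measures retain both $\int g~{\rm d}\mu=0$ and $\int\psi~{\rm d}\mu<\infty$ in the limit. This is exactly the step at which the full-shift machinery of \cite{ij,ij2} fails and at which \eqref{eq:K} must be invoked to prevent the dimension from escaping through measures with infinite $\int\psi~{\rm d}\mu$.
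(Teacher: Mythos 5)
Your skeleton (pass to $g=\phi-\alpha\psi$, lower bound via ergodic measures of full measure on $J_R(\alpha)$, upper bound via a pressure-controlled covering in which \eqref{eq:K} tames non-compactness) matches the paper's, but two of your steps have genuine gaps, and both are filled in the paper by a single key lemma that your proposal is missing. First, in the lower bound your reduction ``to the ergodic case by ergodic decomposition'' fails: the constraint $\int (\phi-\alpha\psi)~{\rm d}\mu=0$ is an average over the ergodic components and is not inherited by the individual components, and $h(\mu)/\lambda_T(\mu)$ is not affine in $\mu$, so a measure built as a convex combination of ergodic measures with Birkhoff quotients on either side of $\alpha$ can have a larger ratio than any ergodic measure satisfying the constraint. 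The paper's substitute is Lemma~\ref{lem:V is sup}: $V(\alpha)$ equals both the pressure threshold $\sup\{\delta:\inf_q P(q(\phi-\alpha\psi)-\delta\log|T'|)>0\}$ and the supremum $\mathcal{E}(\alpha)$ over \emph{compactly supported ergodic} measures. This is proved thermodynamically: Lemma~\ref{lem:press appr 2} (via Sarig's approximation property, Lemma~\ref{lem:press appr}) produces a compact subsystem $M\in\mathcal{K}$ on which $q\mapsto P_M(q(\phi-\alpha\psi)-s\log|T'|)$ is positive, real analytic, and tends to $\infty$ as $q\to\pm\infty$; hence it has an interior critical point $q_0$, and Ruelle's derivative formula gives an equilibrium state $\mu_0$ on $M$ with exactly $\int(\phi-\alpha\psi)~{\rm d}\mu_0=0$ and $h(\mu_0)/\lambda_T(\mu_0)>s$. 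Working with compactly supported ergodic measures also disposes of your acknowledged worry about the dimension formula $\dim_H\mu=h(\mu)/\lambda_T(\mu)$ in the non-compact setting, since the classical results of \cite{hr,m} then apply directly.

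Second, your upper bound hinges on a ``constrained pressure'': the growth rate of partition sums restricted by the cutoff $S_n\psi<Kn$, together with a variational identification of its threshold. You correctly flag this as the hardest point, but it is left entirely unproved, and it is essentially equivalent to the theorem itself; moreover, over a countable alphabet without primitivity assumptions the level-$n$ restricted sums need not even be finite, since the cutoff $\eps n\le S_n\psi<Kn$ can still admit infinitely many cylinders. The paper avoids constrained pressures altogether. By Lemma~\ref{lem:1}, $\dim_H J_R(\alpha)=\sup_j\dim_H J(\alpha,j)$ where points of $J(\alpha,j)$ lie in and return to a fixed $X_j$ infinitely often; one covers $J(\alpha,j)$ by cylinders that begin in $X_j$ and whose $n$-th iterate returns to $X_j$, so each admissible word can be closed up to a periodic orbit, and the cylinder sums are dominated (after bounded distortion) by the Gurevich partition function $Z_n(\cdot\,,X_j)\le e^{nP(\cdot)}$ --- this is exactly how the Gurevich pressure, defined by periodic sums in a fixed cylinder, substitutes for finite primitivity. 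The hypothesis \eqref{eq:K} enters only once and pointwise, to convert $\frac{S_n\phi(x)}{S_n\psi(x)}\in B\bigl(\alpha,\frac{\epsilon\log\xi}{2qK}\bigr)$ into the additive bound $|S_n(q(\phi-\alpha\psi))(x)|\le n\epsilon\log\xi$, which is then absorbed by the uniform expansion $\underline\lambda_T>\log\xi$ (your reformulation of \eqref{eq:K} in terms of $S_ng/n\to0$ serves the same purpose and is correct). Finally, Lemma~\ref{lem:V is sup} again converts $s>V(\alpha)$ into the existence of a single $q$ with $P(q(\phi-\alpha\psi)-s\log|T'|)\le0$, so that Lemma~\ref{lem:2} yields $\dim_H J(\alpha,j)\le s$ for every $j$. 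In short: your outline correctly locates the role of \eqref{eq:K}, but the two steps you defer --- the ergodic reduction and the constrained-pressure identification --- are precisely the content of the paper's Lemma~\ref{lem:V is sup}, and without an argument of that type the proof does not close.
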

By taking $\psi$ to be the constant function $1$ we obtain the following corollary.
\begin{coro}[Birkhoff spectrum] \label{cor:bir}
Let $T:\cup_nX_n\to X$ be a EMV map and $\phi:\cup_nX_n \to \R$ be such that $\phi \in \mathcal{R}$. Let $\alpha \in (\alpha_m,\alpha_M)$ then
\begin{equation*}
\dim_H(J_{R}(\alpha))=\sup\left\{\frac{h(\mu)}{\lambda_T(\mu)}: \int\phi~{\rm d}\mu=\alpha , \lambda_T(\mu) < \infty ,\mu\in \M\right\}.
\end{equation*}
\end{coro}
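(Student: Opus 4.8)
The plan is to derive this directly from Theorem~\ref{thm:termo} by specialising to $\psi \equiv 1$, so that the task reduces entirely to checking that the hypotheses of that theorem are met and that its conclusion collapses to the asserted formula. First I would observe that the constant function $\psi \equiv 1$ lies in $\mathcal{R}_0$: it is (trivially) locally H\"older, since $var_n(\psi) = 0$ for every $n$, and it is bounded below by $\eps = 1 > 0$. With this choice the quotient of Birkhoff sums becomes $\frac{S_n\phi(x)}{S_n\psi(x)} = \frac{S_n\phi(x)}{n}$, so that $J_{\phi,1}(\alpha)$ is exactly the usual Birkhoff-average level set and $J_R(\alpha)$ its restriction to the recurrent set; correspondingly, the quantities $\alpha_m,\alpha_M$ of \eqref{eqn:alpha} become the infimum and supremum of the Birkhoff averages of $\phi$, consistent with the remark following the definition of $b_{\phi,\psi}$.

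Next I would verify the standing hypothesis \eqref{eq:K}. Since $\psi \equiv 1$ forces $S_n\psi(x) = n$ for every $x$ and every $n$, we have $\frac{S_n\psi(x)}{n} = 1$ identically, whence $\limsup_{n\to\infty}\frac{S_n\psi(x)}{n} = 1$ for every $x \in J_R(\alpha)$. Thus \eqref{eq:K} holds with, say, $K = 2$, and Theorem~\ref{thm:termo} applies for every $\alpha \in (\alpha_m,\alpha_M)$ with no further restriction.

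Finally I would simplify the variational formula produced by Theorem~\ref{thm:termo}. The ratio constraint $\frac{\int\phi~{\rm d}\mu}{\int\psi~{\rm d}\mu} = \alpha$ becomes $\int\phi~{\rm d}\mu = \alpha$, because $\int\psi~{\rm d}\mu = \int 1~{\rm d}\mu = 1$ for every $\mu \in \M$; for the same reason the requirement $\int\psi~{\rm d}\mu < \infty$ is automatic, so that $\max\{\lambda_T(\mu),\int\psi~{\rm d}\mu\} < \infty$ reduces to $\lambda_T(\mu) < \infty$. Substituting these into the conclusion of Theorem~\ref{thm:termo} yields precisely the stated identity for $\dim_H(J_R(\alpha))$. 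I expect no substantive obstacle here: all of the analytic content resides in Theorem~\ref{thm:termo}, and the only point meriting explicit verification is hypothesis \eqref{eq:K}, which is satisfied trivially because $S_n\psi \equiv n$ imposes no genuine growth constraint on the recurrent level sets.
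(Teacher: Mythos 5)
Your proposal is correct and follows exactly the route the paper intends: the paper derives Corollary~\ref{cor:bir} from Theorem~\ref{thm:termo} simply by setting $\psi\equiv 1$, and your verification that $\psi\equiv 1\in\mathcal{R}_0$, that hypothesis \eqref{eq:K} holds trivially since $S_n\psi(x)=n$, and that the constraint $\max\{\lambda_T(\mu),\int\psi~{\rm d}\mu\}<\infty$ collapses to $\lambda_T(\mu)<\infty$ fills in precisely the details the paper leaves implicit. No gaps.
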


\begin{rem}
It is a direct consequence of results by Barreira and Schmeling \cite{bsc} (see also \cite[Theorem 11]{bs}) that if $\alpha_m \neq \alpha_M$ then
\begin{equation*}
\dim_{H} X_R = \dim_{H} \left(J' \cap X_R \right).
\end{equation*}
\end{rem}

\subsection{Thermodynamic formalism}
 The proof of Theorem \ref{thm:termo} uses tools from thermodynamic formalism. The main idea is to adapt the arguments of Barriera and Saussol to our setting. We briefly recall the basic notions and results that will be used. The \emph{Gurevich Pressure} of  a locally H\"older potential $\phi:\cup_nX_n\to \R$ was introduced by Sarig in \cite{Sar99}, generalising Gurevich's definition of entropy \cite{gu1}. It is defined by letting
\[Z_n(\phi)= \left(  \sum_{T^n x=x} \exp \left( \sum_{j=0}^{n-1}\phi(T^j(x)) \right)  \mathbbm{1}_{X_{i}}(x) \right),  \]
 where $ \mathbbm{1}_{X_{i}}(x)$ denotes the characteristic function of the cylinder $X_i$, and
\[ P(\phi):= \lim_{n \rightarrow \infty} \frac{\log(Z_n(\phi))}{n}.\]
 The limit always exists and its value does not depend on the cylinder $X_{i}$ considered. This notion of pressure satisfies the following variational principle: if $\phi$ is a locally H\"older potential then
\begin{equation*}
P(\phi)= \sup\left\{h_\sigma(\mu) + \int \phi ~{\rm d}\mu : \mu \in \M \text{ and } -\int \min\{\phi,0\} ~{\rm d} \mu < \infty		\right\}.
\end{equation*}
In this generality, this result is \cite[Theorem 2.10]{IomJorTod13}.  Since the form of this statement is classical, in this note we refer to this as the Variational Principle.
 A measure attaining the supremum above will be called \emph{equilibrium measure} for $\phi$. An  important property of the Gurevich pressure is that it can be approximated by considering functions restricted to certain compact invariant sets. Let
$$\mathcal{K}:= \{ M \subset X : M\neq \emptyset \textrm{ is compact, } T\textrm{-invariant and } T|M \textrm{ is Markov and mixing}   \}.$$
Given any subset $M\subset X$, let $P_M\le P$ and $\M_M\subset \M$ respectively denote the pressure and the set of measures restricted to the set of points which never leave $M$.

Recall that an EMV map can be coded by a countable Markov shift.  We may assume that the alphabet for this shift is $\N$.  We say that $x\in X^\infty$ is \emph{$n$-coded}, if its code lies in $\{1, \ldots, n\}^{\N}$.  In \cite[Theorem 2]{Sar99}, Sarig approximates the full system from inside using the $n$-coded points, yielding the following. 

\begin{lema}\label{lem:press appr}
For each $n\in \N$, let $M_n\in \mathcal{K}$ be the set of $n$-coded points in $X^\infty$.
Then
\begin{enumerate}
\item
for any $\psi\in\mathcal{R}$ we have that $P(\psi) =\lim_{n\to\infty} P_{M_n}(\psi)$;
\item
for any $M\in\mathcal{K}$ there exists $n\in\N$ such that $M\subset M_n$.
\end{enumerate}
\end{lema}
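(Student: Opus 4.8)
The plan is to reduce both assertions to the construction of a single increasing sequence in $\mathcal{K}$ that is \emph{cofinal}, and then to feed it into the approximation property of the Gurevich pressure recalled above. Concretely, by that approximation property (due to Sarig \cite{Sar99}) we have, for every locally H\"older $\psi$, that $P(\psi)=\sup\{P_M(\psi):M\in\mathcal{K}\}$, where $P_M(\psi)$ is the ordinary topological pressure of $\psi|_M$ on the compact mixing subsystem $M$. Granting this, suppose we have built an increasing sequence $\{M_n\}_{n\in\N}\subset\mathcal{K}$, $M_n\subseteq M_{n+1}$, satisfying (2). Since every $\sigma$-invariant measure carried by $M$ is carried by any larger subsystem, the topological pressure is monotone under inclusion, so $\{P_{M_n}(\psi)\}_n$ is nondecreasing and bounded above by $P(\psi)$; in particular $\lim_{n\to\infty}P_{M_n}(\psi)=\sup_n P_{M_n}(\psi)$ exists. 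Cofinality gives $P_M(\psi)\le P_{M_n}(\psi)$ for all large $n$ and every $M\in\mathcal{K}$, whence $\sup_n P_{M_n}(\psi)=\sup_{M\in\mathcal{K}}P_M(\psi)=P(\psi)$, which is (1). So everything reduces to producing the cofinal increasing sequence.

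To construct it, let $G$ be the transition graph of $\Sigma$ (vertex set $\N$, an edge $i\to j$ exactly when $t_{ij}=1$). First note that every $M\in\mathcal{K}$ uses only finitely many symbols: the first-coordinate map $x\mapsto x_1$ is continuous from $\Sigma$ into the discrete set $\N$, so its image on the compact set $M$ is finite, and by $\sigma$-invariance every coordinate of every point of $M$ lies in this finite set; consequently $M$ uses only finitely many edges of $G$. Now define finite subgraphs $G_n\subseteq G$ recursively, retaining at stage $n+1$ everything adjoined at stage $n$ (so that $G_n\subseteq G_{n+1}$). At stage $n$ let $G_n$ contain: all edges of $G$ joining two vertices of $\{1,\dots,n\}$; for each $v\in\{1,\dots,n\}$ a pair of paths $1\rightsquigarrow v$ and $v\rightsquigarrow 1$ in $G$ (which exist by irreducibility of $\Sigma$), all of whose edges are adjoined; and two loops at the base vertex $1$ whose lengths are coprime, which exist because topological mixing of $\Sigma$ forces the greatest common divisor of the loop lengths at $1$ to equal $1$. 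Each $G_n$ is finite. Set $M_n:=\Sigma_{G_n}$, the set of sequences in $\Sigma$ all of whose transitions are edges of $G_n$.

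It remains to verify the properties. Each $G_n$ is finite, strongly connected (every vertex, including the intermediate ones on the adjoined paths, connects to and from $1$), and aperiodic (the two coprime loops at $1$ give $\gcd=1$), so $M_n=\Sigma_{G_n}$ is a compact, $\sigma$-invariant, topologically mixing subshift of finite type contained in $\Sigma$, i.e. $M_n\in\mathcal{K}$; the nesting $G_n\subseteq G_{n+1}$ yields $M_n\subseteq M_{n+1}$. For (2), take $M\in\mathcal{K}$; by the observation above its symbols lie in some $\{1,\dots,N\}$ and all its transitions are edges of $G$. For every $n\ge N$ these transitions join vertices of $\{1,\dots,n\}$ and hence belong to $G_n$, so $M\subseteq\Sigma_{G_n}=M_n$. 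Then (1) follows from the reduction of the first paragraph. (A finite initial segment of the $G_n$ may fail to be strongly connected, aperiodic, or even nonempty; this is immaterial, as one may discard it and reindex.)

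The delicate point, and the main obstacle, is meeting the three constraints on $G_n$ simultaneously: it must be \emph{mixing}, so that $M_n$ genuinely lies in $\mathcal{K}$ and the classical variational principle and the approximation property apply to it, while remaining \emph{increasing} and \emph{cofinal}. Restoring strong connectivity and aperiodicity forces the inclusion of auxiliary edges and vertices outside $\{1,\dots,n\}$, and the content is precisely that these can be adjoined once and retained without breaking the nesting or the cofinality built into the ``all edges among $\{1,\dots,n\}$'' clause. This is exactly where topological mixing of $(\Sigma,\sigma)$ enters, guaranteeing that the connecting paths and the pair of coprime loops exist inside $G$. The remaining ingredients, namely monotonicity of $P_{M_n}(\psi)$ under inclusion and the bound $P_{M_n}(\psi)\le P(\psi)$, are standard facts about topological pressure on compact mixing subshifts of finite type.
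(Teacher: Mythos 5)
Your proof is correct and is essentially the paper's argument: the paper disposes of this lemma by citing the proof of \cite[Theorem 2]{Sar99}, and what you have written out --- the approximation of the Gurevich pressure by pressures of compact topologically mixing Markov subsystems, combined with a nested cofinal sequence of finite subgraphs (all edges among $\{1,\dots,n\}$, connecting paths through a base vertex, and loops at that vertex restoring aperiodicity) --- is precisely the content of that citation. One cosmetic remark: knowing that the gcd of all loop lengths at the base vertex equals $1$ does not by itself produce \emph{two} loops of coprime lengths, but topological mixing gives loops of consecutive lengths at that vertex, and in any case finitely many loops whose lengths have gcd $1$ already suffice for aperiodicity, so your construction stands as written.
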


\begin{proof}
The proof of \cite[Theorem 2]{Sar99} gives this lemma.
\end{proof}

\section{Proof of Theorem \ref{thm:termo}}
In this section we give the proof of the main result of this note, Theorem \ref{thm:termo}. The proof is similar to the one developed in  \cite{h}  to study multifractal spectra for interval maps. It will be convenient to consider invariant measures supported on compact sets. Thus we define
$$\M_{\mathcal{K}}:=\{\mu\in\M: \text{there exists } M\in \mathcal{K} \text{ such that }\mu(X \sm M)=0\}.$$
The following quantities will be crucial in our proof.
\begin{defi}
For $\alpha\in (\alpha_m,\alpha_M)$ let
$$V(\alpha):=\sup\left\{\frac{h(\mu)}{\lambda_T(\mu)}: \frac{\int\phi~{\rm d}\mu}{\int\psi~{\rm d}\mu}=\alpha ,  \max\left\{\lambda_T(\mu),\int\psi~{\rm d}\mu\right\} < \infty \text{ and } \mu\in \M\right \}$$
and
$$\mathcal{E}(\alpha):=\sup\left\{\frac{h(\mu)}{\lambda_T(\mu)}: \frac{\int\phi~{\rm d}\mu}{\int\psi~{\rm d}\mu}=\alpha, 
\text{ and } \mu \in  \M_{\mathcal{K}} \text{ is ergodic}\right\}.$$
\end{defi}

To start the proof we first relate the quantity $V(\alpha)$ to the pressure function. To do this we need the following preparatory lemma which relies on approximating the pressure from below by the pressure for $T$ restricted to compact sets where it is Markov.

\begin{lema}\label{lem:press appr 2}
If $\alpha\in (\alpha_m,\alpha_M$), $\delta>0$ and $\inf\{P(q(\phi-\alpha\psi)-\delta\log|T'|):q\in\R\}>0$ then there exists $M\in\mathcal{K}$ such that:
\begin{enumerate}
\item  $P_{M}(q(\phi-\alpha\psi)-\delta\log |T'|)>0$ for every $q\in\R$,
\item the following equality holds
$$\lim_{q\to\infty}P_{M}(q(\phi-\alpha\psi)-\delta\log |T'|)=\lim_{q\to-\infty}P_{M}(q(\phi-\alpha\psi)-\delta\log |T'|)=\infty.$$
\end{enumerate}
\end{lema}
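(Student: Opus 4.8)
\emph{Proposed approach.} Throughout write $\Phi_q:=q(\phi-\alpha\psi)-\delta\log|T'|$; since $\phi,\psi,\log|T'|\in\mathcal{R}$ and $\mathcal{R}$ is a vector space, $\Phi_q\in\mathcal{R}$ for every $q\in\R$, and the family depends affinely on $q$. The structural fact I would lean on is that, for each fixed $M\in\mathcal{K}$, the map $q\mapsto P_M(\Phi_q)$ is \emph{convex}: by the Variational Principle it equals
\begin{equation*}
P_M(\Phi_q)=\sup_{\mu}\left(h(\mu)-\delta\lambda_T(\mu)+q\int(\phi-\alpha\psi)\,{\rm d}\mu\right),
\end{equation*}
the supremum being over $\mu$ carried by $M$, and each term inside is affine in $q$, so the supremum is convex. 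The same reasoning makes $q\mapsto P(\Phi_q)$ convex. The plan is to use the exhausting sequence $\{M_n\}$ of Lemma~\ref{lem:press appr}, along which $P_{M_n}(\Phi_q)\nearrow P(\Phi_q)$ for each fixed $q$ (the convergence is Lemma~\ref{lem:press appr}(1), and it is monotone because a measure carried by $M_n$ is carried by $M_{n+1}$, so restricted pressures increase under inclusion), and then to upgrade this pointwise convergence to a uniform one by exploiting convexity.

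First I would prove the coercivity $\lim_{q\to+\infty}P(\Phi_q)=\lim_{q\to-\infty}P(\Phi_q)=+\infty$, which uses only that $\alpha$ is \emph{interior} to $(\alpha_m,\alpha_M)$ and not the standing hypothesis. Concretely I would produce two compactly supported measures $\nu_+,\nu_-\in\M_{\mathcal{K}}$ with
\begin{equation*}
\frac{\int\phi\,{\rm d}\nu_+}{\int\psi\,{\rm d}\nu_+}>\alpha>\frac{\int\phi\,{\rm d}\nu_-}{\int\psi\,{\rm d}\nu_-},
\end{equation*}
equivalently $\int(\phi-\alpha\psi)\,{\rm d}\nu_+>0>\int(\phi-\alpha\psi)\,{\rm d}\nu_-$. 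Such measures exist because the extremal ratios $\alpha_m,\alpha_M$ are approximated by Birkhoff ratios of measures carried by the compact subsystems $M_n$ (any element of $\mathcal{K}$ uses finitely many symbols, so these measures have finite entropy, finite $\lambda_T$ and finite $\int\psi$). I expect this step---extracting genuine invariant measures witnessing both signs in a system that may be dissipative, where orbit mass can escape to infinity---to be the main obstacle, and the sole place where openness of $(\alpha_m,\alpha_M)$ is essential. Granting it, by Lemma~\ref{lem:press appr}(2) both supports lie in a single $M_{n_0}$, whence for all $n\ge n_0$ and all $q$,
\begin{equation*}
P_{M_n}(\Phi_q)\ \ge\ P_{M_{n_0}}(\Phi_q)\ \ge\ \max\bigl(\ell_+(q),\ell_-(q)\bigr),\qquad \ell_\pm(q):=h(\nu_\pm)-\delta\lambda_T(\nu_\pm)+q\int(\phi-\alpha\psi)\,{\rm d}\nu_\pm,
\end{equation*}
where $\ell_+,\ell_-$ are affine with slopes of opposite sign. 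Since $\ell_+(q)\to+\infty$ as $q\to+\infty$ and $\ell_-(q)\to+\infty$ as $q\to-\infty$ while each $P_{M_n}(\Phi_q)$ is finite (compact support), this already gives conclusion~(2) for $M_{n_0}$, and it confines the minimum of every $P_{M_n}$ with $n\ge n_0$ to a fixed compact interval: picking $Q$ so large that $\ell_+(q)>0$ for $q\ge Q$ and $\ell_-(q)>0$ for $q\le -Q$, we get $P_{M_n}(\Phi_q)>0$ for all $|q|\ge Q$ and all $n\ge n_0$.

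It then remains to force positivity on the compact interval $[-Q,Q]$ with one subsystem, and this is where the hypothesis $c:=\inf_{q}P(\Phi_q)>0$ enters. Here I would invoke the classical fact that a pointwise-convergent sequence of finite convex functions with finite convex limit converges \emph{uniformly} on compact subintervals of the interior of the domain. Applying it to $P_{M_n}(\Phi_q)\nearrow P(\Phi_q)$ on $[-Q,Q]$, where $P(\Phi_q)\ge c>0$, yields $n_1\ge n_0$ with $P_{M_{n_1}}(\Phi_q)\ge c/2>0$ for all $q\in[-Q,Q]$. Combining with the bounds for $|q|\ge Q$, the set $M:=M_{n_1}\in\mathcal{K}$ satisfies $P_M(\Phi_q)>0$ for every $q\in\R$, which is~(1); and since $M\supseteq M_{n_0}$ the affine lower bounds $\ell_\pm$ persist, so $\lim_{q\to+\infty}P_M(\Phi_q)=\lim_{q\to-\infty}P_M(\Phi_q)=+\infty$, which is~(2). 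Thus $M$ is the required element of $\mathcal{K}$.
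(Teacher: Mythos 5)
Your argument is correct in outline and follows essentially the same route as the paper's proof: both first produce two compactly supported invariant measures whose Birkhoff ratios straddle $\alpha$ (this is exactly the paper's inequality \eqref{eq:mu2}, which the paper likewise asserts without further detail, so you and the paper are at the same level of rigour on the step you flag as the main obstacle), both use the resulting affine-in-$q$ lower bounds from the Variational Principle to obtain conclusion~(2) and to confine any failure of positivity to a compact set of parameters $q$, and both then upgrade the pointwise convergence $P_{M_n}(\Phi_q)\to P(\Phi_q)$ from Lemma~\ref{lem:press appr}, where $\Phi_q:=q(\phi-\alpha\psi)-\delta\log|T'|$, to uniform positivity on that compact set. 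The only real divergence is in this last step: the paper runs a bare-hands compactness argument (it takes the sublevel set $I=\{q:P_{K_3}(\Phi_q)\le\gamma\}$, compact by convexity, supposes $q_n\in I$ with $P_{M_n}(\Phi_{q_n})\le\gamma/2$, extracts $q_*=\lim q_n$, and uses continuity of $q\mapsto P_{M_n}(\Phi_q)$ for fixed $n$ together with monotonicity in $n$ to contradict $P(\Phi_{q_*})\ge\gamma$), whereas you cite the classical fact that finite convex functions converging pointwise to a \emph{finite} convex limit converge uniformly on compacta.

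That citation carries one small technical caveat: nothing in the hypotheses guarantees that $P(\Phi_q)$ is finite for all $q\in[-Q,Q]$ --- the Gurevich pressure of a locally H\"older potential on a countable Markov shift can be $+\infty$, and the lemma only assumes $\inf_q P(\Phi_q)>0$ --- so the theorem you invoke does not literally apply if $P(\Phi_q)=+\infty$ somewhere. The gap is cosmetic rather than genuine, because your setup already contains the fix: with $c:=\inf_q P(\Phi_q)>0$, the truncations $g_n(q):=\min\{P_{M_n}(\Phi_q),c\}$ are continuous in $q$, increase in $n$ (by the inclusion $M_n\subset M_{n+1}$ you already noted), and converge pointwise on $[-Q,Q]$ to the constant $c$; Dini's theorem then gives $n_1$ with $P_{M_{n_1}}(\Phi_q)\ge c/2$ on all of $[-Q,Q]$, which is precisely your conclusion. (The paper's subsequence argument sidesteps finiteness altogether, which is one advantage of its more hands-on route.) With that one-line patch your proof is complete and, modulo packaging of the final compactness step, matches the paper's.
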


\begin{proof}
We start with the second part.  As in \cite{bs}, the conclusion of Theorem~\ref{thm:termo} holds for any compact subsystem $T:M\to M$ for $M\in \mathcal{K}$.  Thus we need to show that for $\alpha\in (\alpha_m,\alpha_M)$,  we can find large enough subsets $K_1,K_2\in\mathcal{K}$, $\mu_1\in \M_{K_1}$ and $\mu_2\in\M_{K_2}$ such that
\begin{equation} \label{eq:mu2}
\frac{\int\phi~\rm{d}\mu_1}{\int\psi~\rm{d}\mu_1}<\alpha<\frac{\int\phi~\rm{d}\mu_2}{\int\psi~\rm{d}\mu_2}.
\end{equation}
To find such a $K_1\in\mathcal{K}$ for a fixed $\alpha$ we let $\gamma\in (\alpha_m,\alpha)$. We can then find a $T$-invariant probability measure $\mu$ such that $\frac{\int \phi\rm{d}\mu}{\int\psi \rm{d}\mu}<\gamma$ and note that via the ergodic decomposition this measure can be assumed to be ergodic. Thus the ergodic theorem, the regularity of our potentials and the Markov structure of our system imply that we can find a periodic point $x$ of period $k$ such that $\frac{S_k\phi(x)}{S_k\psi(x)}<\gamma$. Since the periodic point $x$ is $k-$coded, by Lemma \ref{lem:press appr} we can find a set $K_1\in\mathcal{K}$ which contains $x$ and the invariant measure, $\mu_1$, supported on the orbit of $x$ will satisfy that $\mu_1\in \M_{K_1}$ and
$$\frac{\int \phi~\rm{d}\mu_1}{\int\psi~ \rm{d}\mu_1}=\frac{S_k\phi(x)}{S_k\psi(x)}<\alpha.$$
Exactly the same approach works to find the set $K_2$.
We will use Lemma \ref{lem:press appr} and the Variational Principle to show that there exists $K_3\in\mathcal{K}$ such that
\begin{equation}\lim_{q\to\infty}P_{K_3}(q(\phi-\alpha\psi)-\delta\log |T'|)=\infty=\lim_{q\to-\infty}P_{K_3}(q(\phi-\alpha\psi)-\delta\log |T'|).
\label{eq:K3}
\end{equation}
We begin by the applying the Variational Principle: for $K_3\supset K_2$,
\[P_{K_3}(q(\phi-\alpha\psi)-\delta\log |T'|) \geq \left(h(\mu_2) - \delta \int \log |T'| ~\rm{d}\mu_2 \right) + q  \int \left( \phi -\alpha \psi \right) ~\rm{d}\mu_2.\]
Since by equation \eqref{eq:mu2},
\[ \int \left( \phi -\alpha \psi \right) ~\rm{d}\mu_2 > 0,\]
the first equality in \eqref{eq:K3} follows since
\[ \lim_{q \to \infty}  q  \int \left( \phi -\alpha \psi \right) ~\rm{d}\mu_2 = \infty.\]
An analogous argument using $\mu_1$ yields the second equality in \eqref{eq:K3}.  Hence by using Lemma \ref{lem:press appr} to choose $K_3\in \mathcal{K}$ sufficiently large to contain $K_1\cup K_2$ we obtain part 2 of the lemma.

Now let $\gamma:=\inf\{P(q(\phi-\alpha\psi)-\delta\log|T'|):q\in\R\}>0$ and $I:=\{q \in \R :P_{K_3}(q(\phi-\alpha\psi)-\delta\log |T'|)\leq \gamma\}$. If $I=\emptyset$ then the proof is complete.  If $I\neq\emptyset$ then by the convexity of pressure it is a compact set.

 By Lemma \ref{lem:press appr}  there exists an increasing sequence of sets $\{M_n\}_n\subset \mathcal{K}$ where for some $j\in \N$,  $K_3\subset M_i$ for all $i\ge j$, such that
 \[P(q(\phi-\alpha\psi)-\delta\log |T'|)  =   \lim_{n\to\infty}P_{M_n}(q(\phi-\alpha\psi)-\delta\log |T'|).\]
Therefore, for each $q\in I$ we have that $\lim_{n\to\infty}P_{M_n}(q(\phi-\alpha\psi)-\delta\log |T'|)\geq \gamma$. Now suppose that  for each $n\in\N$ there exists $q_n\in I$ such that $P_{M_n}(q_n(\phi-\alpha\psi)-\delta\log |T'|)\leq \gamma/2$ then since $I$ is compact we can assume, passing to a subsequence if necessary,  that there exists $q_*=\lim_{n\to\infty}q_n$.  By the continuity of the pressure, for any fixed $n\in\N$ we have that
\begin{equation} \label{eq:cont}
P_{M_n}(q_*(\phi-\alpha\psi)-\delta\log |T'|)=\lim_{k\to\infty}P_{M_n}(q_k(\phi-\alpha\psi)-\delta\log |T'|).
\end{equation}
On the other hand, since for every $k \geq n$ we have that $M_n \subset M_k$, we obtain
\begin{equation} \label{eq:mono}
P_{M_n}((q_k(\phi-\alpha\psi)-\delta\log |T'|) \leq
P_{M_k}((q_k(\phi-\alpha\psi)-\delta\log |T'|) \leq \frac{\gamma}{2}.
\end{equation}
Combining equations \eqref{eq:cont} with \eqref{eq:mono}, we obtain
\begin{equation*}
\lim_{n \to \infty} P_{M_n}(q_*(\phi-\alpha\psi)-\delta\log |T'|) \leq \frac{\gamma}{2}.
\end{equation*}
Thus $P(q_*(\phi-\alpha\psi)-\delta\log |T'|)\leq\gamma/2$ which is a contradiction. Therefore we can conclude that there exists $M\in \mathcal{K}$ such that $P_{M}(q(\phi-\alpha\psi)-\delta\log |T'|)>0$ for all $q\in\R$ and $$\lim_{q\to\infty}P_{M}(q(\phi-\alpha\psi)-\delta\log |T'|)=\lim_{q\to-\infty}P_{M}(q(\phi-\alpha\psi)-\delta\log |T'|)=\infty.$$
\end{proof}

We can now relate $V(\alpha)$ to the pressure function in the following lemma, which is the main engine of the proof of Theorem~\ref{thm:termo}.

\begin{lema}\label{pres}\label{lem:V is sup}
For any $\alpha\in (\alpha_m,\alpha_M)$,
$$\mathcal{E}(\alpha)=V(\alpha)=\sup\left\{\delta \in \R:\inf\{P(q(\phi-\alpha\psi)-\delta\log|T'|):q\in\R\}>0\right\}.$$
\end{lema}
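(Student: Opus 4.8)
The plan is to prove the two equalities in Lemma~\ref{lem:V is sup} separately, establishing $\mathcal{E}(\alpha)=V(\alpha)$ and then $V(\alpha)=\sup\left\{\delta:\inf\{P(q(\phi-\alpha\psi)-\delta\log|T'|):q\in\R\}>0\right\}$. Let me denote the right-hand supremum by $S(\alpha)$. The inequality $\mathcal{E}(\alpha)\le V(\alpha)$ is immediate, since every ergodic $\mu\in\M_{\mathcal{K}}$ automatically has finite Lyapunov exponent and finite $\int\psi~{\rm d}\mu$ (being supported on a compact $T$-invariant set $M$, on which both $\log|T'|$ and $\psi$ are bounded) and satisfies the constraint $\int\phi~{\rm d}\mu/\int\psi~{\rm d}\mu=\alpha$, so it competes in the supremum defining $V(\alpha)$. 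For the reverse inequality $V(\alpha)\le\mathcal{E}(\alpha)$, I would take an arbitrary (not necessarily compactly supported, not necessarily ergodic) $\mu\in\M$ achieving the constraints in $V(\alpha)$ and approximate it: first reduce to ergodic measures via the ergodic decomposition (the ratio $h(\mu)/\lambda_T(\mu)$ and the constraint behave well under this because of affineness of the relevant quantities), and then approximate an ergodic measure by measures in $\M_{\mathcal{K}}$ using Lemma~\ref{lem:press appr}, controlling entropy, Lyapunov exponent, and the integrals of $\phi$ and $\psi$ simultaneously so that $h(\mu_n)/\lambda_T(\mu_n)\to h(\mu)/\lambda_T(\mu)$ while keeping $\int\phi_n/\int\psi_n$ close to $\alpha$. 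A small correction step (mixing in the measures $\mu_1,\mu_2$ from the proof of Lemma~\ref{lem:press appr 2} to restore the exact ratio $\alpha$) will be needed.

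Next I would establish $V(\alpha)=S(\alpha)$ using the characterization of pressure by the variational principle. For $S(\alpha)\le V(\alpha)$, suppose $\delta$ satisfies $\inf_q P(q(\phi-\alpha\psi)-\delta\log|T'|)>0$. The function $q\mapsto P(q(\phi-\alpha\psi)-\delta\log|T'|)$ is convex; by Lemma~\ref{lem:press appr 2} I may replace the whole system by a compact $M\in\mathcal{K}$ on which the finite-alphabet pressure $P_M$ is positive for all $q$, tends to $+\infty$ as $q\to\pm\infty$, and is analytic. Hence $q\mapsto P_M(\cdots)$ attains its infimum at some interior $q_0$, where $\frac{d}{dq}P_M=0$. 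The equilibrium measure $\mu_0$ for $q_0(\phi-\alpha\psi)-\delta\log|T'|$ on $M$ then satisfies $\int(\phi-\alpha\psi)~{\rm d}\mu_0=0$, i.e.\ $\int\phi~{\rm d}\mu_0/\int\psi~{\rm d}\mu_0=\alpha$, and the positivity of the pressure at $q_0$ gives
\begin{equation*}
0<P_M(q_0(\phi-\alpha\psi)-\delta\log|T'|)=h(\mu_0)-\delta\lambda_T(\mu_0),
\end{equation*}
so that $h(\mu_0)/\lambda_T(\mu_0)>\delta$. Since $\mu_0\in\M_{\mathcal{K}}$ meets the constraint, this shows $\delta<V(\alpha)$ and taking the supremum over such $\delta$ yields $S(\alpha)\le V(\alpha)$.

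For the opposite inequality $V(\alpha)\le S(\alpha)$, I would take any admissible $\mu$ for $V(\alpha)$, so $\int(\phi-\alpha\psi)~{\rm d}\mu=0$ and $\lambda_T(\mu),\int\psi~{\rm d}\mu<\infty$. Fix $\delta<h(\mu)/\lambda_T(\mu)$; then $h(\mu)-\delta\lambda_T(\mu)>0$. For every $q\in\R$ the variational principle gives
\begin{equation*}
P(q(\phi-\alpha\psi)-\delta\log|T'|)\ge h(\mu)+q\int(\phi-\alpha\psi)~{\rm d}\mu-\delta\lambda_T(\mu)=h(\mu)-\delta\lambda_T(\mu)>0,
\end{equation*}
where the $q$-term vanishes precisely because of the constraint. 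Taking the infimum over $q$ preserves the strict lower bound $h(\mu)-\delta\lambda_T(\mu)>0$, so $\delta\le S(\alpha)$; letting $\delta\uparrow h(\mu)/\lambda_T(\mu)$ and then taking the supremum over $\mu$ gives $V(\alpha)\le S(\alpha)$. The main obstacle in the whole argument is the approximation step in $V(\alpha)\le\mathcal{E}(\alpha)$: because the system is a non-compact countable Markov shift, I must simultaneously control entropy and the integrals of three functions ($\log|T'|$, $\phi$, $\psi$) under compact approximation, which is exactly where the finiteness hypotheses $\max\{\lambda_T(\mu),\int\psi~{\rm d}\mu\}<\infty$ and the pressure-approximation Lemma~\ref{lem:press appr} are indispensable; keeping the ratio constraint exactly equal to $\alpha$ during this approximation (rather than merely close) is the delicate point and is what the auxiliary measures $\mu_1,\mu_2$ straddling $\alpha$ are for.
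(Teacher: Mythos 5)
Your second and third paragraphs are, in substance, exactly the paper's proof: the bound $V(\alpha)\le S(\alpha)$ follows from the Variational Principle with the $q$-term vanishing because of the constraint, and for each $\delta$ with $\inf_q P(q(\phi-\alpha\psi)-\delta\log|T'|)>0$, Lemma~\ref{lem:press appr 2}, analyticity of $q\mapsto P_M$ on the compact subsystem, and Ruelle's derivative formula produce an equilibrium state $\mu_0$ on some $M\in\mathcal{K}$ with $\int(\phi-\alpha\psi)~{\rm d}\mu_0=0$ and $h(\mu_0)/\lambda_T(\mu_0)>\delta$. The genuine gap is in your first paragraph, the direct attack on $V(\alpha)\le\mathcal{E}(\alpha)$. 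The ergodic decomposition does not behave as you claim: although $h$, $\lambda_T$, $\int\phi~{\rm d}\mu$ and $\int\psi~{\rm d}\mu$ are affine in $\mu$, the constraint $\int\phi~{\rm d}\mu/\int\psi~{\rm d}\mu=\alpha$ is \emph{not} inherited by ergodic components. For instance, if $\mu=\frac12(\nu_1+\nu_2)$ with $\nu_1$ carrying high $h/\lambda$ but ratio strictly below $\alpha$ and $\nu_2$ carrying low $h/\lambda$ with ratio above $\alpha$, then no ergodic component satisfies the constraint, and the components that come close may have small dimension quotient. Your proposed repair --- mixing in the measures $\mu_1,\mu_2$ to restore the ratio --- yields convex combinations, which are not ergodic, so the argument cannot terminate inside the ergodic measures of $\M_{\mathcal{K}}$ without a further construction (a horseshoe or equilibrium-state argument). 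Moreover, Lemma~\ref{lem:press appr} approximates \emph{pressure}, not measures: it does not by itself let you approximate a non-compactly supported $\mu$ by measures in $\M_{\mathcal{K}}$ while simultaneously controlling $h$, $\lambda_T$, $\int\phi$ and $\int\psi$. As written, this step would not go through.

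The good news is that the step is redundant, and the fix is already contained in your own argument. The measure $\mu_0$ you construct is ergodic (it is a Gibbs/equilibrium state for a locally H\"older potential on a topologically mixing compact Markov subsystem) and lies in $\M_{\mathcal{K}}$, and it satisfies the constraint \emph{exactly}; hence it witnesses $\mathcal{E}(\alpha)\ge\delta$, not merely $V(\alpha)>\delta$. Taking the supremum over admissible $\delta$ gives $\mathcal{E}(\alpha)\ge S(\alpha)$, and with the trivial inequality $\mathcal{E}(\alpha)\le V(\alpha)$ from your first paragraph the chain
\begin{equation*}
\mathcal{E}(\alpha)\le V(\alpha)\le S(\alpha)\le \mathcal{E}(\alpha)
\end{equation*}
closes with no approximation argument at all. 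This is precisely how the paper proceeds: it proves $S(\alpha)\ge V(\alpha)$ via the Variational Principle and then notes that since $\mu_0$ is ergodic one obtains $\mathcal{E}(\alpha)\ge V(\alpha)\ge s$ for every admissible $s$. So: delete the approximation scheme of your first paragraph, keep only the observation $\mathcal{E}(\alpha)\le V(\alpha)$, and upgrade the conclusion of your $S(\alpha)\le V(\alpha)$ paragraph to $\delta<\mathcal{E}(\alpha)$; the result is a complete proof coinciding with the paper's.
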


\begin{proof}
Let $\eps>0$.  By the definition of $V(\alpha)$, we can find $\mu\in\M$ such that $\frac{h(\mu)}{\int\log|T'|~{\rm d}\mu}>V(\alpha)-\eps$ and $\frac{\int\phi~{\rm d}\mu}{\int\psi~{\rm d}\mu}=\alpha$.
Then it is a consequence of the Variational Principle that
\begin{align*}
P\big(q(\phi-\alpha\psi)-(V(\alpha)&-\eps)\log|T'|\big) \\
&\ge h(\mu)+\int q(\phi-\alpha\psi)~{\rm d}\mu-(V(\alpha)-\eps)\int\log|T'|~{\rm d}\mu\\
&= h(\mu)-(V(\alpha)-\eps)\int\log|T'|~{\rm d}\mu>0.
\end{align*}
Therefore, $\sup\left\{\delta \in \R:P(q(\phi-\alpha\psi)-\delta\log|T'|)>0\right\}\ge V(\alpha)-\eps$ for all $\eps>0$, so $V(\alpha)$ and hence $\mathcal{E}(\alpha)$ are lower bounds.

For the upper bound suppose that $s \in \R$ satisfies
$$\inf_q P(q(\phi-\alpha\psi)-s\log|T'|)>0.$$
 By Lemma \ref{lem:press appr 2} we can find $M \in \mathcal{K}$ such that
$$P_M(q(\phi-\alpha\psi)-s\log|T'|)>0$$
for all  $q\in\R$ and such that
\begin{equation} \label{eq:infinito}
\lim_{q\to\infty} P_M(q(\phi-\alpha\psi)-s\log|T'|)=\lim_{q\to\infty} P_M(q(\phi-\alpha\psi)-s\log|T'|)=\infty.
\end{equation}
Since the function
$q \mapsto P_M(q(\phi-\alpha\psi)-s\log|T'|)$ is real analytic (see \cite{bs}), it is a consequence of \eqref{eq:infinito} that there exists $q_0 \in \R$ such that
\begin{equation*}
\frac{\partial}{\partial q}  P_M(q(\phi-\alpha\psi)-s\log|T'|) \Big|_{q=q_0}=0.
\end{equation*}
Therefore, using Ruelle's  formula for the derivative of pressure (see \cite[Lemma 5.6.4]{pu}), we obtain that
\begin{equation*}
\int (\phi-\alpha\psi)~\rm{d}\mu_0=0,
\end{equation*}
where $\mu_0$ denotes the equilibrium measure  for the potential $q_0(\phi-\alpha\psi)-s\log|T'|$ and the dynamical system $T$ restricted to $M$. Thus,  we have that
\[\frac{\int\phi~\rm{d}\mu_0}{\int\psi~\rm{d}\mu_0}=\alpha.\]
But it also follows from the Variational Principle that
\begin{equation*}
h(\mu_0) + \int (\phi-\alpha\psi)~{\rm{d}}\mu_0 -s \int  \log |T'|~\rm{d}\mu_0 >0.
 \end{equation*}
That is,
\[ \frac{h(\mu_0) }{ \int  \log |T'|~\rm{d}\mu_0} > s.\]
Therefore, since $\mu_0$ is  ergodic  we obtain that $V(\alpha)\geq\mathcal{E}(\alpha)\geq s$ and the result follows.
\end{proof}

It is now straightforward to prove the lower bound.
\begin{lema} \label{lem:low1}
For all $\alpha\in (\alpha_m,\alpha_M)$ we have that
$\dim_H(J_{R}(\alpha))\geq V(\alpha).$
\end{lema}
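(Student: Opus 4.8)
The plan is to establish the lower bound $\dim_H(J_R(\alpha))\ge V(\alpha)$ by using Lemma~\ref{lem:V is sup} to reduce to a statement about a single compact invariant subsystem, and then applying a classical dimension estimate on that subsystem. By Lemma~\ref{lem:V is sup}, for any fixed $\delta<V(\alpha)$ we have $\inf\{P(q(\phi-\alpha\psi)-\delta\log|T'|):q\in\R\}>0$, so by Lemma~\ref{lem:press appr 2} there exists $M\in\mathcal{K}$ with $P_M(q(\phi-\alpha\psi)-\delta\log|T'|)>0$ for all $q\in\R$ together with the two-sided divergence to $+\infty$ as $q\to\pm\infty$. Exactly as in the proof of Lemma~\ref{lem:V is sup}, real analyticity of $q\mapsto P_M(q(\phi-\alpha\psi)-\delta\log|T'|)$ on the compact system together with the coercivity in $q$ produces a critical point $q_0$, hence an equilibrium measure $\mu_0$ supported on $M$ with $\int(\phi-\alpha\psi)\,{\rm d}\mu_0=0$, i.e. $\frac{\int\phi\,{\rm d}\mu_0}{\int\psi\,{\rm d}\mu_0}=\alpha$, and satisfying $\frac{h(\mu_0)}{\int\log|T'|\,{\rm d}\mu_0}>\delta$.

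With such a measure in hand, the core task is to transfer the dynamical quantity $h(\mu_0)/\lambda_T(\mu_0)$ into a Hausdorff dimension lower bound for $J_R(\alpha)$. First I would note that $\mu_0$ is an ergodic measure supported on the compact set $M\subset X_R$, and since $\frac{\int\phi\,{\rm d}\mu_0}{\int\psi\,{\rm d}\mu_0}=\alpha$, the Birkhoff ergodic theorem guarantees that $\mu_0$-almost every point $x$ satisfies $\lim_n \frac{S_n\phi(x)}{S_n\psi(x)}=\frac{\int\phi\,{\rm d}\mu_0}{\int\psi\,{\rm d}\mu_0}=\alpha$, so $\mu_0(J(\alpha))=1$; since $M\subset X_R$ this gives $\mu_0(J_R(\alpha))=1$. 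Consequently $J_R(\alpha)$ carries the full mass of $\mu_0$, and by the mass distribution principle (a Frostman-type lower bound, see \cite{fa}) it suffices to control the local dimension of $\mu_0$. Because $M$ is a compact mixing subsystem coded by a finite-alphabet subshift with $\log|T'|\in\mathcal{R}$ bounded and bounded away from $\log\xi>0$ there, the standard expanding-map estimate yields that for $\mu_0$-a.e. $x$ the pointwise dimension equals $\frac{h(\mu_0)}{\lambda_T(\mu_0)}$; this is the usual Volume Lemma / Young-type dimension formula applied on the uniformly expanding compact piece.

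Putting these together gives $\dim_H(J_R(\alpha))\ge \dim_H(\mu_0)=\frac{h(\mu_0)}{\lambda_T(\mu_0)}>\delta$. Since $\delta<V(\alpha)$ was arbitrary, letting $\delta\uparrow V(\alpha)$ yields $\dim_H(J_R(\alpha))\ge V(\alpha)$, as required.

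The main obstacle, and the only genuinely delicate point, is the passage from "$\mu_0$ is supported on $M$" to "$\mu_0$ sees $J_R(\alpha)$ with full measure and computes its dimension correctly." The quotient-average limit behaves well under the ergodic theorem only because $\int\psi\,{\rm d}\mu_0>0$ (guaranteed by $\psi\in\mathcal{R}_0$, so $\psi\ge\eps$) and $\int\psi\,{\rm d}\mu_0<\infty$; the latter is automatic here since $M$ is compact and $\psi$ is continuous on it, which is precisely why restricting to $\M_{\mathcal{K}}$ in the definition of $\mathcal{E}(\alpha)$ is the right move. I expect no difficulty from the hypothesis \eqref{eq:K} at this stage, as the lower bound does not require it; its role will appear in the matching upper bound. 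The dimension formula itself is standard on the compact uniformly expanding subsystem, so the work is mainly in verifying that the selected equilibrium measure has finite Lyapunov exponent and positive, finite $\psi$-integral, both of which follow from compactness of $M$ and the defining properties of EMV maps.
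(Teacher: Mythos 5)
Your proposal is correct and follows essentially the same route as the paper: the paper simply invokes $V(\alpha)=\mathcal{E}(\alpha)$ from Lemma~\ref{lem:V is sup} to obtain a compactly supported ergodic measure $\mu$ with $\frac{\int\phi\,{\rm d}\mu}{\int\psi\,{\rm d}\mu}=\alpha$ and $\frac{h(\mu)}{\lambda_T(\mu)}>V(\alpha)-\epsilon$, then concludes via $\mu(J_R(\alpha))=1$ and the dimension formula for ergodic measures (citing \cite{hr,m}), exactly as you do. Your only deviation is to re-derive the existence of that measure by unpacking the critical-point argument inside the proof of Lemma~\ref{lem:V is sup} (via Lemma~\ref{lem:press appr 2}) rather than citing the equality $\mathcal{E}(\alpha)=V(\alpha)$ directly, which is harmless redundancy, not a different method.
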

\begin{proof}
Let $\epsilon>0$.
Since Lemma~\ref{lem:V is sup} implies that $V(\alpha)=\mathcal{E}(\alpha)$, there exists a compactly supported invariant  ergodic measure $\mu\in \M_{\mathcal{K}}$ such that $\frac{\int\phi\rm{d}\mu}{\int\psi\rm{d}\mu}=\alpha$ and $\frac{h(\mu)}{\lambda_T(\mu)}>V(\alpha)-\epsilon$. Thus since $\mu(J_{\phi,\psi}(\alpha)\cap X_R)=1$, the well known formula for the dimension of $\mu$ (see  for example \cite{hr,m}) implies that
\begin{equation*}
\dim_H(J_{\phi,\psi}(\alpha)\cap X_R)\geq   \frac{h(\mu)}{\lambda_T(\mu)}>V(\alpha)-\epsilon,
\end{equation*}
and hence $\dim_H(J_{\phi,\psi}(\alpha)\cap X_R)\geq  V(\alpha)$.
\end{proof}

In order to prove the upper bound we will use a covering argument. To start with we set
$$\tilde{J}(\alpha,j)=\tilde{J}_{\phi,\psi}(\alpha,j):=\left\{x \in X^{\infty} :x\in J_{\phi,\psi}(\alpha) \text{ and }\#\{n\in \N:T^n(x)\in X_j\}=\infty\right\}$$
and
$$J(\alpha,j)=J_{\phi,\psi}(\alpha,j):=\tilde{J}_{\phi,\psi}(\alpha,j)\cap X_j.$$
The following lemma can be immediately deduced from the definition and properties of Hausdorff dimension.

\begin{lema} \label{lem:1}
For all $j\in\N$ we have that
$$\dim_H\tilde{J}(\alpha,j)=\dim_HJ(\alpha,j)$$
and thus
$$\dim_HJ_R(\alpha)=\sup_{j \in \N} \dim_HJ(\alpha,j).$$
\end{lema}

The next lemma is the main step in the proof of the upper bound.
\begin{lema} \label{lem:2}
Let $0<\delta<1$, if there exists $q \in \R$ such that
$$P(q(\phi- \alpha \psi)-\delta\log |T'|)\leq 0$$
then  $\dim_HJ(\alpha,j)\leq\delta$ for all $j\in \N$.
\end{lema}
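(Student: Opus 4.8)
The plan is to prove the statement by a covering argument, establishing the stronger fact that $H^{\delta'}(J(\alpha,j))=0$ for every $\delta'\in(\delta,1)$, which immediately gives $\dim_H J(\alpha,j)\le\delta$. Fix such a $\delta'$ and write $g_q:=q(\phi-\alpha\psi)-\delta'\log|T'|$ for the $q$ supplied by the hypothesis. First I would record that, since $\log|T'|\ge\log\xi>0$ by condition (2) of Definition \ref{def:emv}, monotonicity of the Gurevich pressure yields
\[ P(g_q)\le P\big(q(\phi-\alpha\psi)-\delta\log|T'|\big)-(\delta'-\delta)\log\xi\le -(\delta'-\delta)\log\xi<0, \]
so the room created by passing from $\delta$ to $\delta'$ makes the pressure \emph{strictly} negative; this strictness is what will produce summability below.

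Next I would exploit that the defining limit is exactly $\alpha$. For $\eps>0$ set
\[ J_N:=\left\{x\in J(\alpha,j): |S_n(\phi-\alpha\psi)(x)|\le \eps\, S_n\psi(x)\ \text{for all } n\ge N\right\}, \]
so that $J(\alpha,j)=\bigcup_{N}J_N$; since Hausdorff dimension is stable under countable unions it suffices to bound each $\dim_H J_N$. The purpose of $\eps$ is that on $J_N$ we can absorb the contribution of $\phi-\alpha\psi$ into the pressure: because $\psi>0$ we have $-q\,S_n(\phi-\alpha\psi)\le |q|\,\eps\, S_n\psi$, so the perturbed potential $g_q+|q|\eps\psi$ will govern the estimate. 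The crucial point is to choose $\eps$ so small that $P(g_q+|q|\eps\psi)<0$. When $\psi\equiv 1$ (the Birkhoff case of Corollary \ref{cor:bir}) this is immediate, since adding a constant $c$ to a potential adds $c$ to its pressure, giving $P(g_q+|q|\eps)=P(g_q)+|q|\eps<0$ for small $\eps$. For general $\psi\in\mathcal{R}_0$ I would use instead the convexity of $s\mapsto P(q\phi+s\psi-\delta'\log|T'|)$ together with its continuity on the interior of the interval where it is finite: as this function is finite and strictly negative at $s=-q\alpha$, it stays negative under a sufficiently small increase of $s$, which is exactly $P(g_q+|q|\eps\psi)<0$.

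With $\eps$ fixed, I would build the cover. By condition (4) of Definition \ref{def:emv} the potential $\log|T'|$ is locally H\"older, so uniform expansion gives bounded distortion and the estimate $|[i_1,\dots,i_n]|\asymp \exp(-S_n\log|T'|(x))$ for any $x$ in the rank-$n$ cylinder, with a constant independent of $n$; moreover $|[i_1,\dots,i_n]|\le\xi^{-n}$. Fix $\rho>0$ and, for each $x\in J_N$, let $n=n(x)\ge N$ be the least time with $T^n x\in X_j$ and $|C_n(x)|\le\rho$, where $C_n(x)$ is the rank-$n$ cylinder containing $x$ (such a time exists because returns to $X_j$ occur infinitely often and diameters shrink). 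The nested family $\{C_{n(x)}(x)\}$ reduces to a pairwise disjoint subfamily $\mathcal{G}_\rho$ still covering $J_N$, consisting of cylinders of length $\ge m(\rho)$ with $m(\rho)\to\infty$ as $\rho\to0$, each containing a point of $J_N$ and returning to $X_j$. Using $|S_n(\phi-\alpha\psi)|\le\eps S_n\psi$ on $J_N$, for $C\in\mathcal{G}_\rho$ of length $n$ I obtain
\[ |C|^{\delta'}\asymp e^{-\delta' S_n\log|T'|}=e^{S_n g_q}\,e^{-q S_n(\phi-\alpha\psi)}\le e^{S_n(g_q+|q|\eps\psi)}. \]

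Finally I would sum over $\mathcal{G}_\rho$ by grouping the cylinders according to their length. For fixed $n$, the sum of $\sup_C e^{S_n(g_q+|q|\eps\psi)}$ over length-$n$ cylinders returning to $X_j$ is, up to the bounded-distortion and summable-variation constants, comparable to the partition function $Z_n(g_q+|q|\eps\psi)$ defining the Gurevich pressure, hence bounded by $e^{n(P(g_q+|q|\eps\psi)+o(1))}$. Since $P(g_q+|q|\eps\psi)<0$ this is geometrically small, so
\[ \sum_{C\in\mathcal{G}_\rho}|C|^{\delta'}\le C_0\sum_{n\ge m(\rho)} e^{n(P(g_q+|q|\eps\psi)+o(1))}\xrightarrow[\rho\to0]{}0. \]
Therefore $H^{\delta'}(J_N)=0$, whence $\dim_H J_N\le\delta'$; taking the union over $N$ gives $\dim_H J(\alpha,j)\le\delta'$, and letting $\delta'\downarrow\delta$ yields the claim. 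I expect the main obstacle to be the pressure estimate $P(g_q+|q|\eps\psi)<0$ of the second step: controlling the non-compact, infinite-alphabet partition function after perturbing by the possibly unbounded potential $\eps\psi$, and ensuring that the comparison between cylinder sums and the Gurevich partition function survives this perturbation. The remaining ingredients—bounded distortion, the nesting reduction, and the slack gained from $\delta'>\delta$—are routine.
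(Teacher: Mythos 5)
Your overall covering scheme coincides with the paper's: introduce slack ($\delta'>\delta$ there, $\delta+\epsilon$ in the paper) to make the pressure strictly negative, approximate $J(\alpha,j)$ by sets on which $S_n(\phi-\alpha\psi)$ is controlled, and compare the resulting cylinder sums with the Gurevich partition function over periodic points returning to $X_j$. The genuine gap is exactly where you predicted: the claim $P(g_q+|q|\eps\psi)<0$ for some $\eps>0$. Convexity of $s\mapsto P(q\phi+s\psi-\delta'\log|T'|)$ gives continuity only on the \emph{interior} of its finiteness domain, and nothing in the hypotheses guarantees that $s=-q\alpha$ lies in that interior. Since $\psi\in\mathcal{R}_0$ may be unbounded and $\psi>0$ makes the partition function increasing in $s$, the pressure can jump to $+\infty$ immediately: for instance, on a full-branch system with $\psi\asymp(\log|T'|)^2$ on the $k$-th branch and $\phi-\alpha\psi$ bounded, one gets $Z_1(g_q+s\psi)=\infty$ for \emph{every} $s>0$ while $P(g_q)<0$. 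So the perturbation in the $\psi$-direction cannot work in general; your argument is sound only for bounded $\psi$ (in particular $\psi\equiv1$, the setting of Corollary \ref{cor:bir}).

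The paper avoids this by using the standing hypothesis \eqref{eq:K} of Theorem \ref{thm:termo}, which your proof never invokes: every $x\in J(\alpha,j)$ returns to $X_j$ infinitely often, hence lies in $J_R(\alpha)$, so $S_n\psi(x)<Kn$ for all large $n$. Accordingly the paper defines its approximating sets $G(\alpha,n,\epsilon)$ with tolerance $\frac{\epsilon\log\xi}{2|q|K}$ measured against $S_n\psi$, so that $|q\,S_n(\phi-\alpha\psi)|\le\frac{\epsilon\log\xi}{2K}S_n\psi\le\epsilon n\log\xi\le\epsilon S_n\log|T'|$, absorbing the error into a small increase of the coefficient of $\log|T'|$ rather than of $\psi$. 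Increasing that coefficient only \emph{lowers} the Gurevich pressure, since every periodic orbit satisfies $S_n\log|T'|(x)>n\log\xi$ by condition (2) of Definition \ref{def:emv}, so negativity is preserved with no finiteness issue; this linear-in-$n$ control is precisely what hypothesis \eqref{eq:K} is for. The same remark repairs a smaller slip in your first display: condition (2) bounds the lower pointwise Lyapunov exponent $\underline\lambda_T$, not $\log|T'|$ pointwise, so the estimate $P(g_q)\le P\bigl(q(\phi-\alpha\psi)-\delta\log|T'|\bigr)-(\delta'-\delta)\log\xi$ should be justified via $S_n\log|T'|(x)>n\log\xi$ along periodic orbits (as should your diameter bound $|[i_1,\ldots,i_n]|\le\xi^{-n}$, which the paper replaces by the distortion estimate requiring $N\cdot\inf_x\{\underline\lambda_T(x)\}>\sum_n var_n(\log|T'|)$). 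With the $K$-based absorption in place of your pressure perturbation, the rest of your argument (countable decomposition, comparison with $Z_n$ via returning cylinders, geometric summability) goes through and matches the paper's proof.
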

\begin{proof}
Let $\epsilon>0$ be fixed. Note that since for every $x \in X^{\infty}$ we have $\underline\lambda_T(x)>\log\xi>0$ and $P(q(\phi- \alpha \psi)-\delta\log |T'|)\leq 0$ we can conclude that
\begin{equation*}
P(q(\phi-\alpha \psi)-(\delta+\epsilon)\log |T'|) < 0.
\end{equation*}
Denote by $B(x,r)$ the ball of centre $x$ and radius $r$.  Letting $j,n \in\N$, we define
\begin{align*}
G(\alpha,n,\epsilon)
:=\left\{x\in X_j:T^n(x) \in X_j,\frac{S_n\phi(x)}{S_n\psi(x)}\in  B\left(\alpha ,\frac{\epsilon\log\xi}{q2K} \right)\right\}
\end{align*}
where $K$ is defined in \eqref{eq:K}.
Observe that $J(\alpha,j)\subset \bigcap_{r=1}^{\infty}\bigcup_{n=r}^{\infty} G(\alpha,n,\epsilon)$. Consider now the set of cylinders that intersect $G(\alpha,n,\epsilon)$,
\begin{equation*}
C(\alpha,n, \epsilon):=\left\{[i_1,\ldots,i_n]: [i_1,\ldots,i_n]\cap G( \alpha,n,\epsilon)\neq\emptyset \right\}.
\end{equation*}
We can choose $N$ such that for all $n\geq N$ if $[i_1,\ldots,i_n]\in C(\alpha,n,\epsilon)$ then for any $x\in [i_1,\ldots,i_n]$  we have
$$S_n\psi(x) \left(\alpha-\frac{\epsilon\log\xi}{q2K}\right)\leq S_n\phi(x)\leq S_n\psi(x) \left(\alpha+\frac{\epsilon\log\xi}{q2K}\right)$$
and $S_n\psi(x)\leq 2nK$. Thus
\begin{eqnarray*}
S_n(q(\phi- \alpha \psi))(x)&=&qS_n\phi(x)-\alpha qS_n\psi(x)\\
&\leq&qS_n\psi(x)\left(\alpha+\frac{\epsilon\log\xi}{q2K}\right)-\alpha qS_n\psi(x)\\
&=&\frac{n\epsilon\log\xi\S_n\psi(x)}{2K}\leq n\epsilon\log\xi
\end{eqnarray*}
and similarly
 $$S_n(q(\phi- \alpha \psi))(x)\geq -n\epsilon\log\xi.$$
We will also have that 
$$\log |[i_1,\ldots, i_n]|\leq -S_n(\log |T'|)(x)+\sum_{k=1}^n var_k(\log|T'|).$$
In particular, since $[i_1,\ldots, i_n]\in C(\alpha,n, \epsilon)$, the Markov structure gives an $n$-periodic point $y\in [i_1,\ldots, i_n]$ which must have $\log|(T^n)'(y)|>n\log \xi$, so the Mean Value Theorem yields $|[i_1,\ldots, i_n]|\le \xi^ne^{\sum _{k=1}^n var_k(\log|T'|)}:=\xi_n$.

Since $S_n(q(\phi- \alpha \psi))(x)\ge -n\epsilon\log\xi\ge -\epsilon S_n(\log|T'|)(x)$, for $x\in G(\alpha,n,\epsilon)$ and $N$ large enough that the derivative sufficiently dominates the sum of the variations (indeed we require $N\cdot\inf_x\{\underline\lambda_T(x)\}>\sum_nvar_n(\log|T'|)$),
\begin{align*}
H_{\xi_n}^{\delta+4\epsilon}  \left(\cup_{n\ge N}G(\alpha, n,\epsilon)\right)&\leq \sum_{n\geq N}\sum_{C(\alpha,n,\epsilon)}|i_1,\ldots,i_n|^{\delta+4\epsilon}\\
&\leq \sum_{n\geq N}\sum_{x\in G(\alpha,n,\epsilon):T^n(x)=x} e^{-(\delta+3\epsilon)(S_n\log |T'|)(x)}\\
&\leq \sum_{n\geq N}\sum_{x\in G(\alpha,n,\epsilon):T^n(x)=x}  e^{q(S_n\phi(x)-\alpha S_n\psi(x))-(\delta+2\epsilon)(S_n\log |T'|)(x)}\\
&\leq \sum_{n\geq N}\sum_{x\in X_j:T^n(x)=x}  e^{q(S_n\phi(x)-\alpha S_n\psi(x))-(\delta+2\epsilon)(S_n\log |T'|)(x)}\\
&\leq \sum_{n\geq N}  e^{nP(q(\phi-\alpha \psi)-(\delta+\epsilon)\log |T'|)}<\infty
\end{align*}

For the penultimate inequality here we use the facts that we can make $Z_n(q(\phi-\alpha \psi)-(\delta+2\epsilon)\log |T'|)$ close, up to a subexponential error, to $e^{nP(q(\phi-\alpha \psi)-(\delta+2\epsilon)\log |T'|)}$  for $n\ge N$, by choosing $N$ sufficiently large; and that $P(q(\phi-\alpha \psi)-(\delta+2\epsilon)\log |T'|) < P(q(\phi-\alpha \psi)-(\delta+\epsilon)\log |T'|)$.  By letting $N\to\infty$ and then $\epsilon\to 0$  we have that $\dim_HJ(\alpha,j)\leq\delta$.
\end{proof}
We can now prove the upper bound.
\begin{lema}\label{ub}
For all $\alpha\in (\alpha_m,\alpha_M)$ we have that
$\dim_H(J_{\phi,\psi}(\alpha)\cap X_R)\leq V(\alpha).$
\end{lema}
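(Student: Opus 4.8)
The plan is to combine the pressure characterization of $V(\alpha)$ from Lemma~\ref{lem:V is sup} with the covering estimate of Lemma~\ref{lem:2} and the reduction in Lemma~\ref{lem:1}. First I would observe that since $J_R(\alpha)\subset\R$ we always have $\dim_H J_R(\alpha)\le 1$, and moreover the Ruelle inequality gives $h(\mu)\le\lambda_T(\mu)$, so that $V(\alpha)\le 1$; hence if $V(\alpha)=1$ the inequality is immediate, and I may assume $V(\alpha)<1$. It then suffices to show that $\dim_H J_R(\alpha)\le\delta$ for every $\delta$ with $V(\alpha)<\delta<1$ and to let $\delta\to V(\alpha)$. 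The restriction $\delta<1$ is harmless precisely because of the $a$ priori bound $\dim_H J_R(\alpha)\le 1$, and it matches the hypothesis $0<\delta<1$ required by Lemma~\ref{lem:2}.

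Fix such a $\delta$ and choose $\delta_1$ with $V(\alpha)<\delta_1<\delta$. By the characterization in Lemma~\ref{lem:V is sup}, the value $\delta_1$ cannot satisfy $\inf_{q\in\R}P(q(\phi-\alpha\psi)-\delta_1\log|T'|)>0$, so this infimum is $\le 0$. The key step is to convert this into the existence of a single $q_0\in\R$ with $P(q_0(\phi-\alpha\psi)-\delta\log|T'|)\le 0$, which is exactly the hypothesis of Lemma~\ref{lem:2}. Here I would use the strict expansion $\log|T'|\ge\log\xi>0$ coming from Definition~\ref{def:emv}: by monotonicity of the Gurevich pressure and the fact that subtracting a constant shifts pressure by that constant, for any locally H\"older $f$ and any $t>0$ one has $P(f-t\log|T'|)\le P(f-t\log\xi)=P(f)-t\log\xi$. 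Choosing $q_0$ so that $P(q_0(\phi-\alpha\psi)-\delta_1\log|T'|)<(\delta-\delta_1)\log\xi$, which is possible since the infimum over $q$ is $\le 0$, then yields $P(q_0(\phi-\alpha\psi)-\delta\log|T'|)<0$.

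With this $q_0$ in hand, Lemma~\ref{lem:2} gives $\dim_H J(\alpha,j)\le\delta$ for every $j\in\N$, and Lemma~\ref{lem:1} upgrades this to $\dim_H J_R(\alpha)=\sup_{j\in\N}\dim_H J(\alpha,j)\le\delta$. Letting $\delta\to V(\alpha)$ finishes the proof. I expect the only genuinely delicate point to be the passage from the infimum over $q$ being nonpositive to the existence of an \emph{individual} $q_0$ realizing a nonpositive pressure at level $\delta$: the infimum is not claimed to be attained, so one cannot simply pick a minimizer. This is exactly where the \emph{uniform} lower bound $\log\xi$ on the Lyapunov exponents, rather than mere positivity of $\log|T'|$, is essential, since it supplies the fixed strict gap $(\delta-\delta_1)\log\xi$ needed to absorb the loss incurred when passing from the level $\delta_1$ back up to $\delta$.
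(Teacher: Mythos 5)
Your argument is correct and follows essentially the same route as the paper: the pressure characterization of $V(\alpha)$ from Lemma \ref{lem:V is sup} combined with the covering Lemmas \ref{lem:1} and \ref{lem:2}. In fact you are more careful than the paper, whose proof passes directly from $s\ge V(\alpha)+\epsilon$ to the existence of a single $q$ with $P(q(\phi-\alpha\psi)-s\log|T'|)\le 0$; your two-level device ($\delta_1<\delta$ with the gap $(\delta-\delta_1)\log\xi$) is exactly the justification that step needs, and your treatment of the hypothesis $0<\delta<1$ in Lemma \ref{lem:2} fills another point the paper glosses over. One small repair: Definition \ref{def:emv} does not give $\log|T'(x)|\ge\log\xi$ pointwise (it only bounds the lower Lyapunov exponent $\underline\lambda_T$), so the monotonicity comparison $P(f-t\log|T'|)\le P(f-t\log\xi)$ is not literally justified as stated; the inequality $P(f-t\log|T'|)\le P(f)-t\log\xi$ for $t>0$ nevertheless holds, because the Gurevich pressure is computed from periodic orbits, on which $S_n\log|T'|(x)=\log|(T^n)'(x)|>n\log\xi$ (as observed immediately after Definition \ref{def:emv}), or alternatively via the Variational Principle together with the fact that $\int\log|T'|~{\rm d}\mu>\log\xi$ for every $\mu\in\M$.
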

\begin{proof}
Let $\alpha\in (\alpha_m,\alpha_M)$ and $\epsilon>0$ and $s\geq V(\alpha)+\epsilon$.
By Lemma \ref{pres} we can conclude that 
$$\inf\{P(q(\phi-\alpha\psi)-s\log|T'|):q\in\R\}\leq 0.$$
As in Lemma \ref{lem:press appr 2} we can find ergodic measures $\mu_1,\mu_2$ supported on perodic orbits where $\int\phi-\alpha\psi\rm{d}\mu_1<0$ and $\int\phi-\alpha\psi\rm{d}\mu_2>0$. Thus by the variational principle (note that $\mu_1$ and $\mu_2$ have zero entropy and as they are supported on periodic orbits, the function $q(\phi-\alpha\psi)-s\log |T'|$ will be integrable with respect to both these measures) we will have that
$$\lim_{q\to\infty}P(q(\phi-\alpha\psi)-s\log|T'|)=\lim_{q\to-\infty}P(q(\phi-\alpha\psi)-s\log|T'|)=\infty$$ 
Thus since the function $q\mapsto P(q(\phi-\alpha\psi)-s\log|T'|)$ is continuous it will therefore achieve its infimum and so there will exist $q\in\R$ such that
$$P(q(\phi-\alpha\psi)-s\log|T'|)\leq 0.$$
Therefore by Lemmas \ref{lem:1} and \ref{lem:2} it follows that $\dim_H(J_{\phi,\psi}(\alpha)\cap X_R)\leq V(\alpha).$
\end{proof}
This completes the proof of Theorem \ref{thm:termo}.

\section{Discontinuous Birkhoff spectra} \label{sec:fibo}
This section is devoted to exhibiting pathologies and new phenomena that occur when studying dimension theory of a specific dissipative map. We consider  a piecewise linear, uniformly expanding map which is Markov over a countable  partition and that has been studied in detail by Bruin and Todd (see \cite{bt,bt2}). This map was proposed by van Strien to Stratmann as a model for an induced map of a Fibonacci unimodal map. Stratmann and Vogt \cite{sv} computed the Hausdorff dimension of points that converge to zero under iteration of it. The map we consider is the following: let $\lambda \in (1/2,1)$ and consider the partition of the interval $(0,1]$ given by $\{X_n\}_{n \geq 1}$, where $X_n=(\lambda^n , \lambda^{n-1}]$. The  map $F_{\lambda}: (0,1] \to (0,1]$ is defined as follows,
\begin{figure}[ht]
\unitlength=4mm
\begin{picture}(9,10)(-10,-0.5) \let\ts\textstyle
\put(-21,6){
$F_\lambda(x):= \begin{cases}\
\frac{x-\lambda}{1-\lambda} & \text{ if } x\in X_1,\\[2mm]
\ \frac{x-\lambda^{n}}{\lambda(1-\lambda)} & \text{ if } x\in X_n, \quad n \ge 2,
\end{cases}$
}
\put(-20.5,2.5){for the intervals $X_n:=(\lambda^{n}, \lambda^{n-1}]$,}
\put(-20.5,1.4){which form a Markov partition.}
\thinlines
\put(0,0){\line(1,0){10}}\put(0,10){\line(1,0){10}}
\put(0,0){\line(0,1){10}} \put(10,0){\line(0,1){10}}
\put(0,0){\line(1,1){10}}
\thicklines
\put(7.5,0){\line(1,4){2.5}} \put(8.3,-0.9){$\tiny X_1$}
\put(5.5,0){\line(1,5){2}} \put(6.0,-0.9){$\tiny X_2$}
\put(4,0){\line(1,5){1.5}}  \put(4.2,-0.9){$\tiny X_3$}
\put(2.9,0){\line(1,5){1.1}}  \put(2.7,-0.9){$\tiny X_4$}
\put(2.1,0){\line(1,5){0.8}}  \put(1.2,-0.9){$\ldots$}
\put(1.52,0){\line(1,5){0.58}}
\put(1.095,0){\line(1,5){0.425}}
\put(0.791,0){\line(1,5){0.304}}
\put(0.572,0){\line(1,5){0.219}}
\put(0.4138,0){\line(1,5){0.1582}}
\put(0.2974,0){\line(1,5){0.1164}}
\put(0.21464,0){\line(1,5){0.08276}}
\end{picture}
\end{figure}

We stress that the phase space is non-compact. Bruin and Todd \cite{bt} studied the thermodynamic formalism for this map. They showed that even though the map $F_{\lambda}$ is expanding and transitive there is dissipation in the system and they were able to quantify it. It is a direct consequence of Theorem \ref{thm:termo} that the conditional variational principle for quotients of Birkhoff averages holds when restricted to the recurrent set:

\begin{teo} \label{thm:varf}
Let $\phi \in \mathcal{R}$ and  $\psi \in \mathcal{R}_0$.  Then
$$\dim_H(J_{R, \phi, \psi}(\alpha))=\sup\left\{\frac{h(\mu)}{\lambda_{F_{\lambda}}(\mu)}:\frac{\int\phi~{\rm d}\mu}{\int\psi~{\rm d}\mu}=\alpha \text{ and } \mu\in \M\right\}.$$
\label{thm:rec}
\end{teo}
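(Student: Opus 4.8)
The plan is to show that $F_\lambda$ fits the hypotheses of Theorem~\ref{thm:termo}, and then to remove the two features present there but absent from the statement of Theorem~\ref{thm:varf}: the hypothesis \eqref{eq:K} and the finiteness constraints $\max\{\lambda_T(\mu),\int\psi\,{\rm d}\mu\}<\infty$ inside the supremum. First I would check that $F_\lambda$ is an EMV map in the sense of Definition~\ref{def:emv}. On each $X_n$ the map is affine, hence $C^1$, with $|F_\lambda'|\equiv(1-\lambda)^{-1}$ on $X_1$ and $|F_\lambda'|\equiv(\lambda(1-\lambda))^{-1}$ on $X_n$ for $n\ge 2$; since $\lambda\in(1/2,1)$ both constants exceed $2$, so $\underline\lambda_{F_\lambda}(x)\ge\log\frac1{1-\lambda}>\log 2$ and we may take $\xi=(1-\lambda)^{-1}$. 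A direct computation gives that $X_1$ maps onto $(0,1]$ and that $X_n$ ($n\ge2$) maps onto $(0,\lambda^{n-2}]=\bigcup_{k\ge n-1}X_k$, so the admissible transitions are $1\to k$ for every $k$ and $n\to k$ for $k\ge n-1$; the self-loops $t_{nn}=1$, together with the facts that every state reaches $1$ by descending one index at a time and that $1$ reaches every state, show the coding shift is topologically mixing. Finally $\log|F_\lambda'|$ is constant on each $1$-cylinder, so $var_n(\log|F_\lambda'|\circ\pi)=0$ and $\log|F_\lambda'|\in\mathcal{R}$. This establishes conditions (1)--(4).

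The decisive simplification special to $F_\lambda$ is that $\log|F_\lambda'|$ takes only the two values $\log\frac1{1-\lambda}$ and $\log\frac1{\lambda(1-\lambda)}$. Consequently, for every $\mu\in\M$ one has $\lambda_{F_\lambda}(\mu)\in\big[\log\frac1{1-\lambda},\log\frac1{\lambda(1-\lambda)}\big]\subset(0,\infty)$, so the requirement $\lambda_T(\mu)<\infty$ is automatic and may be dropped. For the constraint $\int\psi\,{\rm d}\mu<\infty$ I would argue that it is already implicit: since $\psi\in\mathcal{R}_0$ satisfies $\psi\ge\eps>0$ we have $\int\psi\,{\rm d}\mu\ge\eps>0$, and for the defining quotient $\frac{\int\phi\,{\rm d}\mu}{\int\psi\,{\rm d}\mu}$ to represent the finite number $\alpha$ one needs $\int\psi\,{\rm d}\mu<\infty$. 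Hence every measure competing in the supremum of Theorem~\ref{thm:varf} already satisfies both finiteness conditions, and the unconstrained supremum coincides with $V(\alpha)$. The lower bound then requires no extra work: Lemma~\ref{lem:low1} together with the identity $V(\alpha)=\mathcal{E}(\alpha)$ of Lemma~\ref{lem:V is sup} yields $\dim_H(J_R(\alpha))\ge V(\alpha)$, which equals the full supremum.

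For the upper bound I would work through the proof of Theorem~\ref{thm:termo} rather than quoting its statement, so as to dispense with \eqref{eq:K}. Using Lemma~\ref{lem:1} it suffices to bound $\dim_H J(\alpha,j)$ for each $j$, and I would split
$$J(\alpha,j)=\Big(\bigcup_{K\in\N}\big\{x\in J(\alpha,j):\liminf_{n}S_n\psi(x)/n\le K\big\}\Big)\cup E_j,\qquad E_j:=\big\{x\in J(\alpha,j):S_n\psi(x)/n\to\infty\big\}.$$
On the set where $\liminf_n S_n\psi(x)/n\le K$ there are infinitely many scales $n$ with $S_n\psi(x)\le(K+1)n$, and the covering estimate of Lemma~\ref{lem:2} applies verbatim with this value of $K$, restricting the outer sum to those good scales; since $\log|F_\lambda'|\ge\log\xi>0$ these cylinders have diameter $e^{-\Theta(n)}\to0$, so they furnish covers of arbitrarily small mesh and we obtain $\dim_H\le\delta$ whenever $P(q(\phi-\alpha\psi)-\delta\log|F_\lambda'|)\le 0$ for some $q$, hence $\dim_H\le V(\alpha)$ by Lemma~\ref{lem:V is sup}.

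The hard part will be the exceptional set $E_j$, on which \eqref{eq:K} fails at \emph{every} scale and the naive covering breaks down: because $\psi$ may be unbounded, the Gurevich pressure $P(t\psi-s\log|F_\lambda'|)$ is infinite for all $t>0$, so the usual large-deviation insertion of $e^{t(S_n\psi-Mn)}$ does not converge. Here I would instead exploit the rigid geometry of $F_\lambda$: the two-valued derivative forces every level-$n$ cylinder meeting $E_j$ to have size $e^{-\Theta(n)}$ with the \emph{same} contribution from all deep symbols, while the Markov rule $n\to k$, $k\ge n-1$, constrains the deep excursions responsible for $S_n\psi/n\to\infty$ to descend one index at a time, so that such excursions lie along tightly restricted symbolic paths whose exponential growth rate can be counted directly. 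The aim is to show, via this restricted count rather than the full pressure, that $\dim_H E_j\le V(\alpha)$; note that when $\psi$ is bounded (in particular for the Birkhoff spectrum of Corollary~\ref{cor:bir}, where $\psi\equiv1$) the set $E_j$ is empty and this obstacle disappears entirely. Granting the bound on $E_j$, we conclude $\dim_H(J_R(\alpha))\le V(\alpha)$, which combined with the lower bound and the identification of the supremum gives the stated equality.
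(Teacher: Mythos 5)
Your first two paragraphs reconstruct exactly what the paper does: its entire proof of Theorem~\ref{thm:varf} is the remark that it is a direct consequence of Theorem~\ref{thm:termo}, the implicit content being what you verify --- that $F_\lambda$ is an EMV map, and that since $\log|F_\lambda'|$ takes only the two values $\log\frac{1}{1-\lambda}$ and $\log\frac{1}{\lambda(1-\lambda)}$ the constraint $\lambda_T(\mu)<\infty$ is vacuous, while the constraint $\frac{\int\phi\,{\rm d}\mu}{\int\psi\,{\rm d}\mu}=\alpha$ already encodes $\int\psi\,{\rm d}\mu<\infty$. Your lower bound, via Lemmas~\ref{lem:V is sup} and \ref{lem:low1}, is likewise fine. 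You are also right --- and more careful than the paper --- to notice that hypothesis \eqref{eq:K} does not appear in Theorem~\ref{thm:varf} and can genuinely fail for unbounded $\psi\in\mathcal{R}_0$: taking $\phi=\alpha\psi+u$ with $u\in\mathcal{R}$ bounded, a recurrent point whose excursions into deep states have growing lengths satisfies $S_n\psi(x)/n\to\infty$ while $S_n\phi(x)/S_n\psi(x)\to\alpha$, so your exceptional set $E_j$ can be nonempty. The real question is therefore whether your programme to remove \eqref{eq:K} closes, and it does not.

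Two concrete gaps. First, Lemma~\ref{lem:2} does not apply ``verbatim'' at your good scales: its covering estimate hinges on the requirement $T^n(x)\in X_j$ built into the set $G(\alpha,n,\epsilon)$, which is what allows each covering word $(i_1,\ldots,i_n)$ with $i_1=j$ to be closed into a periodic point of period $n$ (since $t_{i_n j}=1$) and the resulting sum to be dominated by the Gurevich partition function based at $X_j$. Hence the covering scales must be return times to $X_j$. Under \eqref{eq:K} every large $n$ --- in particular every return time --- is a good scale, which is exactly what that hypothesis buys; but the scales furnished by $\liminf_n S_n\psi(x)/n\le K$ need not be return times, and in this map returning from depth $m$ forces at least $m-j$ further steps through every intermediate state, during which $S_n\psi$ is uncontrolled, so the estimate cannot simply be transported to the next return. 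Second, and decisively, the bound $\dim_H E_j\le V(\alpha)$ is never proved: you state it as an aim and proceed ``granting'' it. This is not a routine omission. Points of $E_j$ spend an asymptotically full proportion of their $\psi$-weighted time in deep states, behaviour invisible to the measures with $\int\psi\,{\rm d}\mu<\infty$ that define $V(\alpha)$; and the paper's own Theorem~\ref{thm:main} shows that for this very map, sets dominated by mass escaping to infinity can carry dimension strictly above the variational supremum. So your missing step sits precisely where a failure, if there is one, would occur, and the appeal to the rigid two-valued geometry is a hope, not an estimate. As written, your argument is complete only when $\psi$ is bounded (then $E_j=\emptyset$ and \eqref{eq:K} is automatic), which is the regime that the paper's one-line deduction from Theorem~\ref{thm:termo} already covers.
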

However, if we consider the whole repeller the situation is more complicated as the following theorem shows,

\begin{teo} \label{thm:main}
Let $\phi: (0,1] \to \R$ be a H\"older potential such that $\lim_{x \to 0} \phi(x)= a$. The Birkhoff spectrum of $\phi$ with respect to the dynamical system $F_{\lambda}$ satisfies
\begin{enumerate}
\item If $\alpha= a$ then $\dim_H J_{\phi, 1}(\alpha)=1$.
\item If $\alpha \neq a$ then $\dim_H J_{\phi, 1}(\alpha) \leq  -\frac{\log 4}{\log (\lambda(1-\lambda))}$.
\end{enumerate}
In particular the function $b_{\phi, 1}$ is discontinuous at $\alpha=a$.  Moreover, the multifractal spectrum $b_{\phi, 1}$ in the set $[\alpha_m, \alpha_M] \setminus\{a\}$ satisfies the following conditional variational principle
\begin{equation*}
b_{\phi, 1}(\alpha)= \sup\left\{ \frac{h(\mu)}{\lambda_{F_{\lambda}}(\mu)} : \int \phi~{\rm d}\mu = \alpha   \text{ and } \mu \in \M \right\}.
\end{equation*}
For $\alpha=a$ the function $b_{\phi, 1}(\alpha)$ does not satisfy the conditional variational principle.
\end{teo}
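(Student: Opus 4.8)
The plan is to route everything through the dichotomy ``recurrent versus transient'' and then to feed the recurrent regime into Theorem~\ref{thm:varf}. The first and decisive observation I would record is that for $F_\lambda$ a point $x\in X^\infty$ is \emph{non}-recurrent exactly when the index $k_n$ of the cylinder $X_{k_n}\ni F_\lambda^n(x)$ satisfies $k_n\to\infty$, which in view of $X_{k}=(\lambda^{k},\lambda^{k-1})$ is the same as $F_\lambda^n(x)\to 0$. Since $\phi$ is H\"older on $(0,1]$ with $\lim_{x\to0}\phi(x)=a$, along such an orbit $\phi(F_\lambda^n(x))\to a$, so by Ces\`aro convergence $\tfrac1n S_n\phi(x)\to a$. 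Hence every transient point lies in $J_{\phi,1}(a)$, giving $X^\infty\setminus X_R\subseteq J_{\phi,1}(a)$, while for $\alpha\neq a$ a transient point cannot have average $\alpha$; thus
\[
J_{\phi,1}(\alpha)=J_{R,\phi,1}(\alpha)\qquad\text{for every }\alpha\neq a .
\]

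The second ingredient is the dissipation estimate $\dim_H X_R\le s_c<1$ together with the global bound $h(\mu)/\lambda_{F_\lambda}(\mu)\le s_c$ for all $\mu\in\M$, where $s_c:=\tfrac{\log4}{-\log(\lambda(1-\lambda))}$. Here $\log 4$ is the Gurevich entropy of the coding shift: the admissible transitions let the index drop only by one at a time but increase arbitrarily, so the first-return loops to a fixed state are counted by Catalan numbers, of exponential growth $4$ and polynomial tail $n^{-3/2}$. Using the thermodynamic formalism for $F_\lambda$ of Bruin and Todd \cite{bt,bt2} I would compute that $P\!\left(-s\log|F_\lambda'|\right)=\log4+s\log(\lambda(1-\lambda))$ in the relevant range, so that $P\!\left(-s_c\log|F_\lambda'|\right)=0$. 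The Variational Principle then yields $h(\mu)\le s_c\,\lambda_{F_\lambda}(\mu)$, i.e.\ $h(\mu)/\lambda_{F_\lambda}(\mu)\le s_c$, for every $\mu\in\M$, while a covering argument in the spirit of Lemma~\ref{lem:2} (taken with $q=0$) gives $\dim_H X_R\le s_c$; and $s_c<1$ because $\lambda(1-\lambda)<\tfrac14$ on $(1/2,1)$. The step I expect to be the main obstacle is establishing this vanishing pressure: the branch $X_1$ expands only at the slower rate $-\log(1-\lambda)$, so a naive Lyapunov lower bound is too weak, and what rescues the clean slope $\log(\lambda(1-\lambda))$ is precisely the null-recurrence (the $n^{-3/2}$ tail), which forces high-entropy measures to charge $X_1$ with vanishing frequency. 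Verifying that the weighted first-return series stays $\le 1$ at $s=s_c$ is exactly where the hypothesis $\lambda>1/2$ enters.

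Granting these inputs the theorem follows quickly. For $\alpha\neq a$, combining $J_{\phi,1}(\alpha)=J_{R,\phi,1}(\alpha)$ with Theorem~\ref{thm:varf} gives at once the stated conditional variational principle $b_{\phi,1}(\alpha)=\sup\{h(\mu)/\lambda_{F_\lambda}(\mu):\int\phi\,{\rm d}\mu=\alpha,\ \mu\in\M\}$, and, since the supremum is over measures all satisfying $h/\lambda_{F_\lambda}\le s_c$, also $b_{\phi,1}(\alpha)\le s_c$, which is item~(2). For item~(1), the trivial bound $\dim_H J_{\phi,1}(a)\le1$ is complemented by $J_{\phi,1}(a)\supseteq X^\infty\setminus X_R$; as $X^\infty$ is the complement in $(0,1]$ of a countable set it has full Lebesgue measure, so $\dim_H X^\infty=1$, and since $\dim_H X_R\le s_c<1$ we obtain $\dim_H(X^\infty\setminus X_R)=\dim_H X^\infty=1$. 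Hence $\dim_H J_{\phi,1}(a)=1$.

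Finally I would read off the last two assertions from the same global inequality. At $\alpha=a$ the right-hand side of the conditional variational principle is $\sup\{h(\mu)/\lambda_{F_\lambda}(\mu):\int\phi\,{\rm d}\mu=a\}\le s_c<1=b_{\phi,1}(a)$, so the principle fails there. Moreover $b_{\phi,1}(\alpha)\le s_c<1$ for every $\alpha\neq a$ whereas $b_{\phi,1}(a)=1$, so $b_{\phi,1}$ jumps at $a$ and is discontinuous there.
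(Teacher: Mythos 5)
Your proposal is correct, and its skeleton coincides with the paper's: transient (non-recurrent) orbits of $F_\lambda$ tend to $0$, so by the Ces\`aro argument they all lie in $J_{\phi,1}(a)$; hence $J_{\phi,1}(\alpha)=J_{R,\phi,1}(\alpha)$ for $\alpha\neq a$, Theorem~\ref{thm:varf} gives the conditional variational principle off $a$, and the bound $h(\mu)/\lambda_{F_\lambda}(\mu)\le s_c:=-\log 4/\log(\lambda(1-\lambda))$ for all $\mu\in\M$ defeats the principle at $\alpha=a$. Where you genuinely diverge is in how the two dimension inputs are sourced. The paper takes both $\dim_H\Omega_\lambda=1$ and $\dim_H X_R=s_c$ ready-made from Bruin--Todd (Theorem~\ref{thm:bt}; the escaping-set statement there rests on the Stratmann--Vogt computation), and item (1) is then immediate from $\Omega_\lambda\subseteq J_{\phi,1}(a)$. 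You instead propose to establish only the upper bound $\dim_H X_R\le s_c$, via the transient-regime pressure line $P(-s\log|F_\lambda'|)=\log 4+s\log(\lambda(1-\lambda))$ together with the covering argument of Lemma~\ref{lem:2} at $q=0$, and then to get $\dim_H\Omega_\lambda=1$ essentially for free: $(0,1]\setminus X^\infty$ is countable, so $\dim_H X^\infty=1$, and countable stability of Hausdorff dimension plus $\dim_H X_R<1$ forces $\dim_H(X^\infty\setminus X_R)=\dim_H\Omega_\lambda=1$. This complementation trick buys something real --- it bypasses the Stratmann--Vogt input entirely --- at the price of making the pressure formula load-bearing. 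That formula is correct and need not be reproved: it is exactly Bruin--Todd's transient-regime pressure, and it matches the recurrent-regime formula $t\log(1-\lambda)-\log(1-\lambda^t)$ quoted in the paper at the transition point $t_1=-\log 2/\log\lambda$ (both evaluate to $t_1\log(1-\lambda)+\log 2$ there); moreover $s_c<t_1$ precisely because $\lambda>\tfrac12$, so the line is valid in a neighbourhood of $s_c$ and $P(-(s_c+\varepsilon)\log|F_\lambda'|)<0$, which is exactly what Lemma~\ref{lem:2} requires. Your global inequality $h(\mu)\le s_c\,\lambda_{F_\lambda}(\mu)$ from the Variational Principle at $P(-s_c\log|F_\lambda'|)=0$ is also sound --- here $\log|F_\lambda'|$ takes only the two values $-\log(1-\lambda)$ and $-\log(\lambda(1-\lambda))$, so all integrability side conditions are automatic --- and it substantiates the paper's terser assertion that $\dim_H\mu\le s_c$ for every $\mu\in\M$. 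The one caveat: your Catalan-type count of first-return loops (steps down by exactly one, up arbitrarily, giving growth $4^n$ with an $n^{-3/2}$ tail) is left as a sketch; in a final write-up you should either carry it out or simply cite \cite{bt}, as the paper does, since every fact you need is contained in Theorem~\ref{thm:bt} and the quoted pressure function.
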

We therefore exhibit a map for which the Birkhoff spectrum is discontinuous and does not satisfy the conditional variational principle in one point, $\alpha=a$. However it does satisfy it in the complement of the point $\alpha=a$.

In order to prove Theorem \ref{thm:main} we first recall the thermodynamic and dimension theoretic description that Bruin and Todd have made of the map $F_{\lambda}$. The \emph{escaping set} of the map $F_{\lambda}$  is defined by
\begin{eqnarray*}
\Omega_{\lambda}:=\left\{x  \in (0, 1] : \lim_{n \to \infty} F_{\lambda}^n (x) = 0 \right\}\end{eqnarray*} (so in particular $\Omega_\lambda=(0, 1]\sm X_R$),
and the \emph{hyperbolic dimension} is defined by
\begin{equation} \label{def:hyp}
\dim_{hyp}(F_{\lambda}):= \sup \{ \dim_H \Lambda : \Lambda \subset (0,1] \text{ compact, non-empty and } F_{\lambda}-\text{invariant} \}.
\end{equation}
It was proved in \cite[Theorems A and C]{bt} that
\begin{teo}[Bruin-Todd] \label{thm:bt}
If $\lambda \in (1/2,1)$ for the map $F_{\lambda}$ we have
\begin{enumerate}
\item The Lebesgue measure is dissipative.
\item The Hausdorff dimension of the escaping set is given by  $\dim_{H} \Omega_{\lambda}= 1$.
\item The Hausdorff dimension of the recurrent  set is given by
\begin{equation*}
\dim_{hyp}(F_{\lambda}) = \frac{- \log 4}{\log(\lambda(1-\lambda))} <1.
\end{equation*}
\end{enumerate}
\end{teo}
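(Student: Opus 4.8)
The plan is to treat the three assertions in the order $(1)\Rightarrow(2)$, with $(3)$ an essentially independent thermodynamic computation. The decisive feature of $F_\lambda$ is that it is piecewise \emph{affine}: on $X_1$ it has slope $1/(1-\lambda)$ and maps onto $(0,1]$, while on $X_n$ ($n\ge2$) it has slope $1/(\lambda(1-\lambda))$ and maps onto $(0,\lambda^{n-2})$. Consequently a symbol $n$ may be followed only by symbols $k\ge n-1$ (the level drops by at most one but may jump up arbitrarily), and, since each branch is affine and full onto its image, the symbol sequence of an orbit is, under normalised Lebesgue measure, an \emph{exact} Markov chain on $\N$. For part $(1)$ I would read off its increment law from $|X_k|=\lambda^{k-1}(1-\lambda)$: from a level $n\ge2$ one has $\mathbb{P}(\Delta=-1)=1-\lambda$ and $\mathbb{P}(\Delta=j)=\lambda^{\,j+1}(1-\lambda)$ for integers $j\ge0$, which is spatially homogeneous with mean $(2\lambda-1)/(1-\lambda)$, strictly positive precisely because $\lambda>1/2$ (the boundary level $1$ has mean next-level $1/(1-\lambda)>1$, so it only biases the walk upward). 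As the increments have exponential tails, the level dominates a homogeneous random walk of positive mean and hence tends to $+\infty$ for a.e.\ starting point by the strong law; that is, $F_\lambda^n(x)\to 0$ for Lebesgue-a.e.\ $x$. Thus $\mathrm{Leb}(\Omega_\lambda)=1$ and $\mathrm{Leb}(X_R)=0$, so Lebesgue is (totally) dissipative.

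Part $(2)$ is then immediate: a subset of $\R$ of positive Lebesgue measure has Hausdorff dimension $1$ (since $H^1$ dominates Lebesgue measure), so $\dim_H\Omega_\lambda=1$ follows from $\mathrm{Leb}(\Omega_\lambda)=1$ established in $(1)$, while the reverse bound is trivial as $\Omega_\lambda\subset(0,1]$. For part $(3)$ I would first note that any compact $F_\lambda$-invariant $\Lambda\subset(0,1]$ is bounded away from $0$ and hence meets only finitely many $X_n$; therefore $\dim_{hyp}(F_\lambda)=\sup_N s_N$, where $s_N$ is the Hausdorff dimension of the limit set $\Lambda_N$ of the (irreducible) subsystem on the alphabet $\{1,\dots,N\}$. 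Because the branches are affine the distortion vanishes, so Bowen's formula applies verbatim and $s_N$ is the root of $P_{M_N}(-s\log|F_\lambda'|)=0$. By the pressure-approximation Lemma~\ref{lem:press appr} this identifies $\dim_{hyp}(F_\lambda)=\inf\{s:P(-s\log|F_\lambda'|)\le0\}$ for the Gurevich pressure of the full countable shift.

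It remains to evaluate that pressure. The potential $-s\log|F_\lambda'|$ is locally constant, equal to $\log w$ on every symbol $n\ge2$ with $w:=(\lambda(1-\lambda))^s$; hence the weight of an admissible period-$n$ loop through a fixed high symbol is $w^{n}$ times the number of such loops. Since the admissible steps are exactly those with increment $\ge-1$, these loops are \L{}ukasiewicz excursions, counted by the Catalan numbers $C_n\sim 4^n/(\sqrt{\pi}\,n^{3/2})$; therefore $Z_n\asymp(4w)^n$ and $P(-s\log|F_\lambda'|)=\log\!\big(4(\lambda(1-\lambda))^s\big)$. Setting this to zero gives $(\lambda(1-\lambda))^{s}=1/4$, i.e.\ $s=-\log4/\log(\lambda(1-\lambda))$, and because $\lambda(1-\lambda)<1/4$ for $\lambda\in(1/2,1)$ this value lies strictly below $1$.

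The main obstacle is the exact evaluation of the Gurevich pressure of this \emph{non-compact} shift. Two points need care: identifying the exponential growth constant of the admissible loops as precisely $4$ (the \L{}ukasiewicz/Catalan rate), and showing that excursions to the boundary symbol $1$, whose weight $(1-\lambda)^s$ exceeds $w$, contribute only subexponentially so that the bulk rate $4w$ is not beaten; a comparison (e.g.\ the $1\leftrightarrow2$ oscillation has per-step weight $(\lambda^{1/2}(1-\lambda))^s<4w$) makes this plausible but must be made uniform. Equivalently one can bypass the combinatorics and analyse the transfer matrix directly: the right-eigenvector equation reduces to the recurrence $v_{n+1}=v_n-(w/\rho)v_{n-1}$, whose characteristic roots coalesce exactly at $\rho=4w$, the generalised principal eigenvalue. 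The last ingredient, that $\sup_N s_N$ really equals this Bowen root with no dimension gap, is furnished by Lemma~\ref{lem:press appr} together with the absence of distortion.
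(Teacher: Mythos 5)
The paper does not prove this theorem at all: it is quoted from Bruin--Todd and the proof is deferred to \cite[Theorems A and C]{bt}, so your proposal is by construction a different (self-contained) route, and most of it is sound. Parts (1) and (2) are correct as sketched: because every branch is affine and maps exactly onto a union of partition elements, the itinerary under Lebesgue measure is an honest Markov chain; your increment law $\mathbb{P}(\Delta=-1)=1-\lambda$, $\mathbb{P}(\Delta=j)=\lambda^{j+1}(1-\lambda)$ for $j\ge 0$ is right (it sums to $1$ and has mean $(2\lambda-1)/(1-\lambda)>0$ precisely for $\lambda>1/2$), transience of the level gives $F_\lambda^n(x)\to 0$ for Lebesgue-a.e.\ $x$, and $H^1\ge\mathrm{Leb}$ then yields $\dim_H\Omega_\lambda=1$. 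The reduction in (3), namely $\dim_{hyp}(F_\lambda)=\sup_N s_N=\inf\{s:P(-s\log|F_\lambda'|)\le 0\}$ via Bowen's formula on finite subsystems and Lemma~\ref{lem:press appr} (zero distortion, pressure continuous and strictly decreasing since $\log|F_\lambda'|$ is bounded between two positive constants), is also fine.

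The genuine issue is the one you flagged, and it is worse than ``must be made uniform'': the claim $Z_n\asymp(4w)^n$ is \emph{false} for large $s$. The paper itself quotes from \cite{bt} that $P(-t\log|F_\lambda'|)=t\log(1-\lambda)-\log(1-\lambda^t)$ for $t\ge t_c:=-\log 2/\log\lambda$, and this recurrent branch strictly exceeds $\log\bigl(4(\lambda(1-\lambda))^t\bigr)$ for $t>t_c$; excursions through low symbols dominate there. Your computation is salvaged only because the Bowen root $s^*=-\log 4/\log(\lambda(1-\lambda))$ satisfies $s^*\le t_c$, which reduces exactly to $1-\lambda\le\lambda$, i.e.\ to the standing hypothesis $\lambda\ge 1/2$ --- a check your write-up omits and must contain. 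Happily, your own transfer-matrix suggestion closes the gap cleanly: with $w=(\lambda(1-\lambda))^s$ and $\rho=4w$, the vector $v_n=2^{-n}$ satisfies $w\sum_{k\ge n-1}2^{-k}=4w\,2^{-n}=\rho v_n$ in the bulk, while at the boundary $\rho v_1=2w\ge(1-\lambda)^s\sum_{k\ge1}2^{-k}=(1-\lambda)^s$ holds iff $2\lambda^s\ge 1$, i.e.\ iff $s\le t_c$. Since the potential is locally constant, $Z_n$ is a diagonal entry of the weighted transition matrix $M$, and a positive superharmonic vector gives $(M^n)_{ii}v_i\le(M^nv)_i\le\rho^nv_i$, hence $P\le\log(4w)$ for $s\le t_c$; combined with your \L{}ukasiewicz/Catalan loop count, which is a valid \emph{lower} bound for all $s$, this pins $P(-s^*\log|F_\lambda'|)=0$ and $s^*<1$ (as $\lambda(1-\lambda)<1/4$ for $\lambda\ne\tfrac12$). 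With the pressure formula restricted to $s\le t_c$, the verification $s^*\le t_c$, and the supersolution argument spelled out, your proof is complete --- and, unlike the paper, self-contained.
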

We can now prove Theorem  \ref{thm:main}.
\begin{proof}[Proof of Theorem  \ref{thm:main}.]
If $ x \in \Omega_{\lambda}$ then
\[\lim_{n \to \infty} \frac{1}{n} \sum_{i=0}^{n-1} \phi(F_{\lambda}^i x) = a.\]
By Theorem \ref{thm:bt},  $\dim_{H}\Omega_{\lambda} =1$, so $b(a)=1$. On the other hand, for every $\alpha \neq a$ we have that $J(\alpha) \subset (0,1] \setminus \Omega_{\lambda}$. A direct consequence of  Theorem \ref{thm:bt} yields
\[b(\alpha)=\dim_{H}J(\alpha) \leq   -\frac{\log 4}{\log (\lambda(1-\lambda))} <1.\]
Therefore, the multifractal spectrum, $b(\alpha)$, is discontinuous at $\alpha=a$.

Since every $\mu \in \M$ must be supported on the recurrent set, the final part of Theorem~\ref{thm:bt} implies
$$\dim_H\mu \leq  -\frac{\log 4}{\log (\lambda(1-\lambda))} <1.$$
Therefore it is clear that the conditional variational principle does not hold for $\alpha=a$. The fact that it does hold in the recurrent set follows from Theorem \ref{thm:varf}.
\end{proof}

\subsection{Lyapunov spectrum} Perhaps the most important potential to consider is $\phi(x)=\log |F_{\lambda}'(x)|$. In this context the Birkhoff spectrum is called the \emph{Lyapunov spectrum}. In the example  we are considering we can describe in great detail the spectrum. Indeed, we can show that it varies analytically in a half open interval and that it is discontinuous in one point. This is the first example where a discontinuous Lyapunov spectrum for a topologically transitive map has been explicitly calculated that we are aware of. Note that this phenomenon is likely to occur in situations where the hyperbolic dimension is different from the Hausdorff dimension of the repeller, see \cite{su}. We stress that the domain of the spectrum is an interval and that it has  no isolated points (compare with \cite{ms}).

Note that in this case we have that
$$\alpha_m=-\log (1-\lambda)\text{ and }\alpha_M=-\log\lambda(1-\lambda):=a.$$
We also have an explicit form for the pressure of $-t\phi$ given in \cite{bt} which in particular says that
$$P(-t\phi)= t\log (1-\lambda)-\log (1-\lambda^t)\text{ for }t\geq\frac{-\log 2}{\log\lambda}.$$
This allows us to deduce the following result, see Figure \ref{fig:Lspec SV}.

\begin{figure}[h!]
\begin{center}
\includegraphics[scale=0.4]{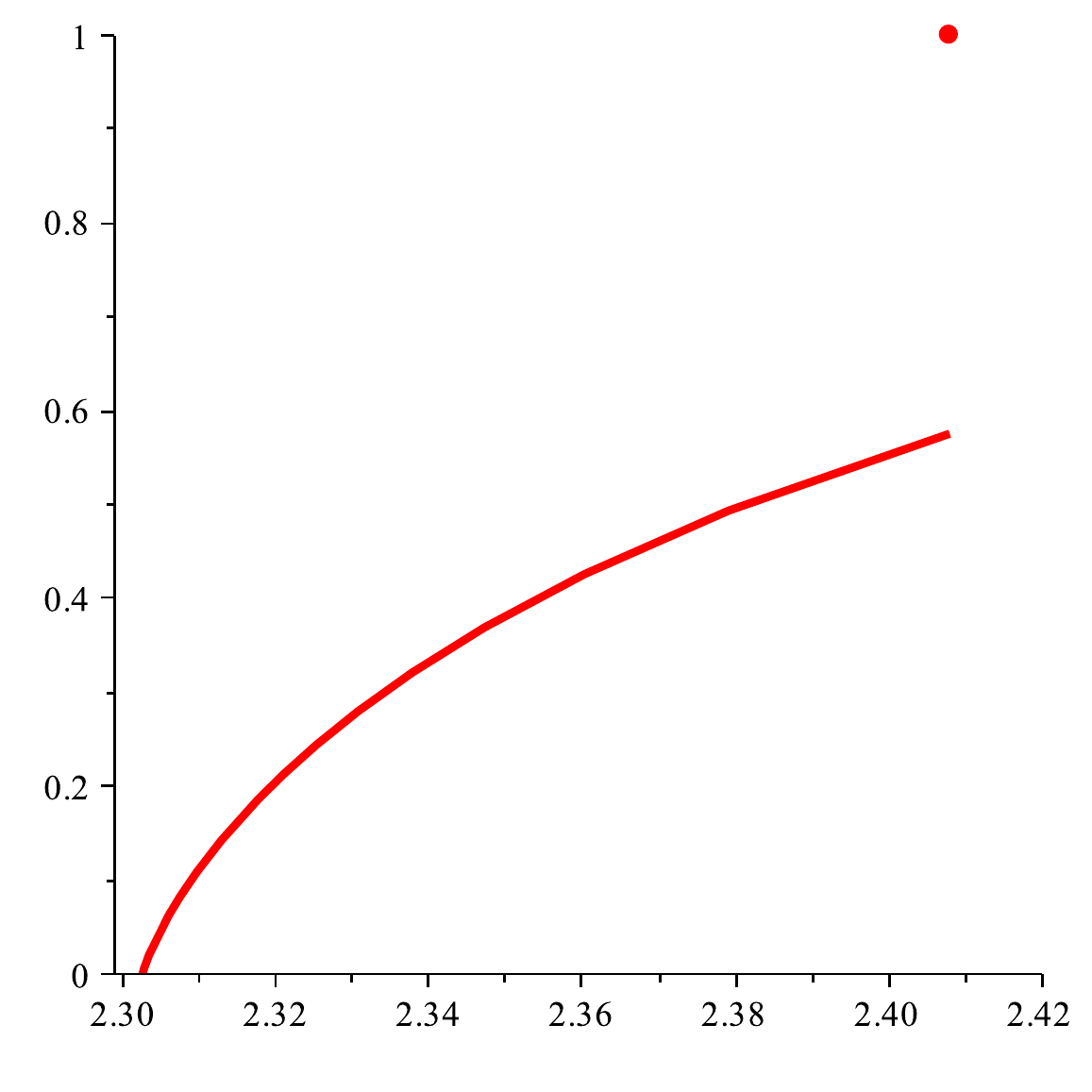}
\put(-110,0.05){$\alpha$}
\put(-270, 130){$\dim_H(J(\alpha))$}
\caption{Lyapunov spectrum for $\lambda=0.9$}
\label{fig:Lspec SV}
\end{center}
\end{figure}

\begin{prop} \label{prop:lyap}
Consider the map $F_\lambda$ for $\lambda\in (\frac12, 1)$. Then for any $t>\frac{-\log 2}{\log\lambda}$,
\begin{equation}
\dim_H J\left(-\log(1-\lambda)-\frac{\lambda^t\log \lambda}{1-\lambda^t}\right)=\frac{t\log (1-\lambda)-\log (1-\lambda^t)}{-\log(1-\lambda)-\frac{\lambda^t\log \lambda}{1-\lambda^t}}+t.
\label{eq:Lyap F}
\end{equation}
and $dim_H (J(-\log\lambda(1-\lambda)))=1$. In particular the function $\alpha\to\dim_H J(\alpha)$ is analytic in $(\alpha_m,\alpha_M)$ but discontinuous at $\alpha_M$.
\end{prop}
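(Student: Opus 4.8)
The plan is to combine the conditional variational principle of Theorem~\ref{thm:main} with the explicit pressure formula for $F_\lambda$. First I would record the elementary features of $\phi:=\log|F_\lambda'|$: it equals $-\log(1-\lambda)$ on $X_1$ and $-\log\lambda(1-\lambda)$ on each $X_n$ with $n\ge 2$, so $\phi$ is bounded, piecewise constant (hence in $\mathcal{R}$ and admissible in Theorem~\ref{thm:main}), with $\alpha_m=-\log(1-\lambda)$, $\alpha_M=-\log\lambda(1-\lambda)$ and $\lim_{x\to 0}\phi(x)=\alpha_M=:a$. The endpoint statement $\dim_H J(\alpha_M)=1$ is then immediate from Theorem~\ref{thm:main}(1). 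For $\alpha\in(\alpha_m,\alpha_M)$, since $\int\phi\,{\rm d}\mu=\lambda_{F_\lambda}(\mu)$ for every invariant $\mu$, Theorem~\ref{thm:main} reduces the problem to evaluating
$$b(\alpha)=\sup\left\{\frac{h(\mu)}{\lambda_{F_\lambda}(\mu)}:\lambda_{F_\lambda}(\mu)=\alpha,\ \mu\in\M\right\}.$$

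Next I would run the standard Legendre-transform computation, writing $T(t):=P(-t\phi)$. For the upper bound, the Variational Principle gives, for every $t\in\R$ and every $\mu$ with $\lambda_{F_\lambda}(\mu)=\alpha$, the inequality $h(\mu)\le T(t)+t\alpha$ (boundedness of $\phi$ makes all the integrals finite), so $b(\alpha)\le\inf_{t}\bigl(T(t)/\alpha+t\bigr)$. For the matching lower bound I would use the equilibrium state $\mu_t$ of $-t\phi$: Ruelle's derivative formula gives $\lambda_{F_\lambda}(\mu_t)=-T'(t)$ and turns the Variational Principle into an equality, so $h(\mu_t)/\lambda_{F_\lambda}(\mu_t)=T(t)/\lambda_{F_\lambda}(\mu_t)+t$. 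Choosing $t$ with $-T'(t)=\alpha$ (exactly the minimiser of $t\mapsto T(t)/\alpha+t$) then yields $b(\alpha)=T(t)/\alpha+t$ with $\alpha=-T'(t)$.

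It remains to feed in the explicit pressure. Differentiating $T(t)=t\log(1-\lambda)-\log(1-\lambda^t)$ gives $-T'(t)=-\log(1-\lambda)-\frac{\lambda^t\log\lambda}{1-\lambda^t}$, which is precisely the argument of $J$ in \eqref{eq:Lyap F}, and substituting into $b(\alpha)=T(t)/\alpha+t$ produces the stated formula. To pin down the range of the parameter I would note that $T''(t)=\frac{(\log\lambda)^2\lambda^t}{(1-\lambda^t)^2}>0$, so $T$ is strictly convex and $t\mapsto\alpha(t)=-T'(t)$ is a real-analytic decreasing diffeomorphism; since $\lambda^{t}=\tfrac12$ exactly when $t=t_0:=-\log2/\log\lambda$, one checks $\alpha(t_0)=\alpha_M$ while $\alpha(t)\to\alpha_m$ as $t\to\infty$. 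Hence $(t_0,\infty)$ maps onto $(\alpha_m,\alpha_M)$ and $b$ is real analytic there. For the discontinuity I would compute $\lim_{t\to t_0^+}\bigl(T(t)/\alpha(t)+t\bigr)$; a short calculation gives $T(t_0)+t_0\alpha_M=2\log2$, so the left limit equals $\frac{2\log2}{\alpha_M}=\frac{-\log4}{\log\lambda(1-\lambda)}=\dim_{hyp}(F_\lambda)<1=b(\alpha_M)$ by Theorem~\ref{thm:bt}, establishing the jump at $\alpha_M$.

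The main obstacle is the lower bound, specifically justifying the existence of the equilibrium measures $\mu_t$ and the validity of Ruelle's formula $\lambda_{F_\lambda}(\mu_t)=-T'(t)$ in this non-compact, countable-Markov setting. This is exactly where the restriction $t>t_0$ is essential, since $t_0$ is the location of a phase transition; one leans on the thermodynamic analysis of Bruin and Todd \cite{bt} (together with Sarig's formalism) to guarantee that for $t>t_0$ the equilibrium state exists, is an invariant probability measure admissible in the supremum defining $b(\alpha)$, and realises the derivative of the pressure.
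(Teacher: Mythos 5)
Your proposal is correct and follows essentially the same route as the paper's proof: it reduces via Theorem \ref{thm:main} to the conditional variational principle, then runs the Legendre-transform computation with the explicit Bruin--Todd pressure formula, using the equilibrium states $\mu_t$ from \cite{bt} for $t>\frac{-\log 2}{\log\lambda}$ and the derivative-of-pressure relation $\lambda(\mu_t)=-T'(t)$, exactly as the paper does with $g(t)=P(-t\phi)$ and $g'(t)=-\alpha_t$. Your closing computation $T(t_0)+t_0\alpha_M=2\log 2$, giving the one-sided limit $\frac{-\log 4}{\log(\lambda(1-\lambda))}=\dim_{hyp}(F_\lambda)<1$, is an equivalent (and slightly cleaner) form of the paper's expression $\bigl(\tfrac{\log 2}{\log\lambda}\bigr)\bigl(\tfrac{\log(\lambda/(1-\lambda))}{-\log(\lambda(1-\lambda))}-1\bigr)$ for the discontinuity at $\alpha_M$.
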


\begin{proof}
Given $t>\frac{-\log 2}{\log\lambda}$, set $\alpha_t:=\left(-\log(1-\lambda)-\frac{\lambda^t\log \lambda}{1-\lambda^t}\right)$. Then defining $g:(-\log 2/\log\lambda, \infty)\to \R$  by $g(t)=P(-t\phi)$, we obtain $g'(t)=-\alpha_t$.
Moreover by the results in \cite{bt} it follows that for $t$ in our specified range, the potential $-t\phi$ has an unique equilibrium state $\mu_t$ with $\lambda(\mu_t)=\alpha_t$ and $\frac{h(\mu_t)}{\lambda(\mu_t)}=g(t)/\alpha_t+t$. If we let $\mu$ be an $F_{\lambda}$ invariant measure such that $\lambda(\mu)=\alpha_t$ then by the Variational Principle, $h(\mu)\leq h(\mu_t)$. Therefore $\frac{h(\mu)}{\lambda(\mu)}\leq g(t)/\alpha_t+t$ and thus $\dim_H(J_R(\alpha))=V(\alpha)=g(t)/\alpha_t+t$.
We next check the range of values of $\alpha$ for which equation \eqref{eq:Lyap F} holds.  Clearly, $\lim_{t\searrow \frac{-\log 2}{\log\lambda}}\alpha_t =\alpha_M$ and $\lim_{t\to\infty}\alpha_t =\alpha_m$, so we have analyticity of $\alpha\mapsto \dim_H J(\alpha)$ on $(\alpha_m,\alpha_M)$.  Since $\lambda\neq \frac12$ we have
$$\lim_{\alpha \to a} \dim_H J(\alpha) =\left(\frac{\log 2}{\log\lambda}\right)\left(\frac{\log\left(\frac\lambda{1-\lambda}\right)}{-\log(\lambda(1-\lambda))}-1\right)<1= \dim_H J(\alpha_M),$$
so there is a discontinuity at $\alpha_M$, as claimed.
\end{proof}


\begin{thebibliography}{99}

\bibitem[AL]{al} A.\ Avila and M.\ Lyubich, \emph{Hausdorff dimension and conformal measures of Feigenbaum Julia sets.} J. Amer. Math. Soc. \textbf{21} (2008), no. 2, 305--363.

\bibitem[Ba]{ba} L. Barreira,  \emph{Dimension and recurrence in hyperbolic dynamics.} Progress in Mathematics, 272. Birkhauser Verlag, Basel, 2008.

\bibitem[BS]{bs}  L. Barreira and B. Saussol, \emph{Variational principles and mixed multifractal spectra.} Trans. Amer. Math. Soc. \textbf{353} (2001), no. 10, 3919--3944.

\bibitem[BSSc]{BarSauSch02} L. Barreira, B. Saussol  and J. Schmeling,
\emph{Higher-dimensional multifractal analysis.}
 J. Math. Pures Appl. (9) \textbf{81} (2002), 67--91.

\bibitem[BSc]{bsc} L. Barreira  and J. Schmeling,  \emph{Sets of ``non-typical'' points have full topological entropy and full Hausdorff dimension}. Israel J. Math. \textbf{116} (2000), 29--70.

\bibitem[BT1]{bt}  H. Bruin and  M. Todd, \emph{Transience and thermodynamic formalism for infinitely branched interval maps.} J. London Math. Soc. \textbf{86} (2012), 171--194.

\bibitem[BT2]{bt2} H. Bruin and M. Todd, \emph{Wild attractors and thermodynamic formalism.} Monatsh. Math. Monatsh. Math. \textbf{178}  (2015) 39--83.

\bibitem[Cl]{cl} V. Climenhaga, \emph{The thermodynamic approach to multifractal analysis.}  Ergodic Theory Dynam. Systems \textbf{34} (2014), no. 5, 1409--1450.

\bibitem[C]{Cyr11} V.\ Cyr,  \emph{Countable Markov shifts with Transient Potentials.}
Proc. London Math. Soc. \textbf{103} (2011), 923--949.

\bibitem[CS]{CS} V.\ Cyr and O. Sarig, \emph{Spectral Gap and Transience for Ruelle Operators on Countable Markov Shifts.} Comm. Math. Phys. \textbf{292}  (2009), 637--666.

\bibitem[Fa]{fa} K. Falconer, \emph{Fractal geometry. Mathematical foundations and applications.} Second edition. John Wiley \& Sons, Inc., Hoboken, NJ, 2003.

\bibitem[FS]{fs} K. Falk and B. Stratmann, \emph{Remarks on Hausdorff dimensions for transient limit sets of Kleinian groups,} Tohoku Math. J. (2) \textbf{56} (2004), no. 4, 571--582.

\bibitem[FFW]{ffw} A. Fan, D. Feng and J. Wu, \emph{Recurrence, dimension and entropy.} J. London Math. Soc. (2) \textbf{64} (2001), no. 1, 229--244.

\bibitem[FLP]{flp} A. Fan, L. Liao and J. Peyri\'ere, \emph{Generic points in systems of specification and Banach valued Birkhoff ergodic average.} Discrete Contin. Dyn. Syst. \textbf{21} (2008), no. 4, 1103--1128.

\bibitem[FLWW]{flww} A. Fan, L. Liao, B. Wang and J. Wu. \emph{On Khintchine exponents and Lyapunov exponents of continued fractions,} Ergodic Theory Dynam. Systems \textbf{29} (2009), no. 1, 73--109.

\bibitem[FLW]{flw} D. Feng, K-S. Lau and J. Wu, \emph{Ergodic limits on the conformal repellers.} Adv. Math. \textbf{169} (2002), no. 1, 58--91.

\bibitem[GR]{gr} K. Gelfert and M. Rams, \emph{The Lyapunov spectrum of some parabolic systems,} Ergodic Theory Dynam. Systems \textbf{29} (2009), no. 3, 919--940.

\bibitem[Gu]{gu1} B.M.\ Gurevi\v c,
\emph{Topological entropy for denumerable Markov chains,}
Dokl. Akad. Nauk SSSR {\bf 10} (1969) 911--915.

\bibitem[HMU]{hmu}P. Hanus, R. D. Mauldin and M. Urbanski,
 \emph{Thermodynamic formalism and multifractal analysis of conformal infinite iterated function systems,}
 Acta Math. Hungar. \textbf{96} (2002), no. 1-2, 27--98.

\bibitem[H]{h}  F. Hofbauer, {\emph Multifractal spectra of Birkhoff averages for a piecewise monotone interval map.} Fund. Math. \textbf{208} (2010), no. 2, 95--121.

\bibitem[HR]{hr} F. Hofbauer and P. Raith, \emph{The Hausdorff dimension of an ergodic invariant measure for a piecewise monotonic map of the interval.} Canad. Math. Bull. \textbf{35} (1992), no. 1, 84--98.

\bibitem[I]{io} G. Iommi, \emph{Multifractal analysis for countable Markov shifts.} Ergodic Theory Dynam. Systems \textbf{25} (2005) 1881--1907.

\bibitem[IJ1]{ij} G. Iommi and T. Jordan \emph{Multifractal analysis of Birkhoff averages for countable Markov maps.} Ergodic Theory Dynam. Systems. \textbf{35}  (2015), no. 8, 2559--2586.

\bibitem[IJ2]{ij2} G. Iommi and T. Jordan \emph{Multifractal analysis of quotients of Birkhoff sums for countable Markov maps.} Int. Math. Res. Not. IMRN  2, 460--498 (2015).

\bibitem[IJT]{IomJorTod13}  G. Iommi, T. Jordan and M. Todd, \emph{Recurrence and transience for suspension flows.} Israel J. Math. \textbf{209} (2015), no. 2, 547--592.

\bibitem[IT]{it} G. Iommi and M. Todd,  \emph{Transience in Dynamical Systems.}  Ergodic Theory Dynam. Systems \textbf{33} (2013), no. 5, 1450--1476.

\bibitem[JJOP] {jjop}  A. Johansson, T. Jordan, A. Oberg and M. Pollicott, \emph{Multifractal analysis of non-uniformly hyperbolic systems.}    Israel J. Math. \textbf{177} (2010), 125--144.

\bibitem[KMS]{kms}  M. Kesseb\"{o}hmer, S. Munday and B. Stratmann, \emph{Strong renewal theorems and Lyapunov spectra for a -Farey and a -Lüroth systems,} Ergodic Theory Dynam. Systems \textbf{32} (2012), no. 3, 989--1017.

\bibitem[KS]{ks}   M. Kesseb\"{o}hmer and B. Stratmann, \emph{A multifractal analysis for Stern-Brocot intervals, continued fractions and Diophantine growth rates}, J. Reine Angew. Math. \textbf{605} (2007), 133--163.

\bibitem[KU]{ku}   M. Kesseb\"{o}hmer and M. Urba\'nski, \emph{Higher-dimensional multifractal value sets for conformal infinite graph directed Markov systems.}  Nonlinearity \textbf{20} (2007), no. 8, 1969--1985.

\bibitem[MS]{ms} N. Makarov and S. Smirnov, \emph{On ``thermodynamics'' of rational maps. I. Negative spectrum.} Comm. Math. Phys. \textbf{211} (2000), no. 3, 705--743.

\bibitem[M]{m} A. Manning, \emph{A relation between Lyapunov exponents, Hausdorff dimension and entropy.} Ergodic Theory Dynamical Systems \textbf{1} (1981), no. 4, 451--459.

\bibitem[MU]{mubook}
R. Mauldin and M. Urba\'{n}ski, \emph{Graph directed Markov systems: geometry and dynamics of limit sets}, Cambridge tracts in mathematics 148, Cambridge University Press, Cambridge 2003.

\bibitem[N]{n}
K. Nakaishi, \emph{Multifractal formalism for some parabolic maps,}
Ergodic Theory Dynam. Systems 20 (2000), no. 3, 843--857.

\bibitem[O]{olivier} E. Olivier, \emph{Structure multifractale d'une dynamique non expansive définie sur un ensemble de Cantor,}  C. R. Acad. Sci. Paris S\'er. I Math. \textbf{331} (2000), no. 8, 605--610.

\bibitem[Ol]{olsen} L. Olsen, \emph{Multifractal analysis of divergence points of deformed measure theoretical Birkhoff averages.}, J. Math. Pures Appl. (9) \textbf{82} (2003), no. 12, 1591--1649.

\bibitem[Pa]{pa} S.J.\ Patterson, \emph{Further remarks on the exponent of convergence of Poincar\'e series.} Tohoku Math. J. (2) \textbf{35} (1983), no. 3, 357--373.

\bibitem[PW]{pw} Y. Pesin  and  H. Weiss,  \emph{The multifractal analysis of Birkhoff averages and large deviations.} Global analysis of dynamical systems, 419--431, Inst. Phys., Bristol, 2001.

\bibitem[PoW]{pow} M. Pollicott and H. Weiss  \emph{Multifractal analysis of Lyapunov exponent for continued fraction and
Manneville-Pomeau transformations and applications to Diophantine
approximation,}
Comm. Math. Phys. 207 (1999), no. 1, 145--171.

\bibitem[PU]{pu}  F.\ Przytycki, M.\ Urba\'nski,
 \emph{Conformal Fractals: Ergodic Theory Methods,} Cambridge University Press 2010.

\bibitem[Sa1]{Sar99}
O. Sarig, \emph{Thermodynamic formalism for countable Markov
shifts}. Ergodic Theory Dynam. Systems \textbf{19} (1999),
1565--1593.

\bibitem[Sa2]{sa2} O. Sarig,  \emph{Phase transitions for countable Markov shifts.}  Comm. Math. Phys.  \textbf{217}  (2001),  no. 3, 555--577.

\bibitem[Sa3]{sa3}
O. Sarig, \emph{Existence of Gibbs measures for countable Markov
shifts}, Proc. Amer. Math. Soc. \textbf{131} (2003), 1751--1758.

\bibitem[SV]{sv} B. Stratmann and R. Vogt, \emph{Fractal dimension of dissipative sets.} Nonlinearity \textbf{10} (1997) 565--577.

\bibitem[SU]{su} B. Stratmann and M.\ Urba\'nski, \emph{Pseudo-Markov systems and infinitely generated Schottky groups.} Amer. J. Math. \textbf{129} (2007), no. 4, 1019--1062.

\bibitem[TV]{tv} F. Takens and E. Verbitskiy,  \emph{On the variational principle for the topological entropy of certain non-compact sets.} Ergodic Theory Dynam. Systems \textbf{23} (2003), no. 1, 317--348.


\end{thebibliography}
\end{document}